\numberwithin{equation}{section}
\newtheorem{theorem}{Theorem}[section]
\newtheorem{lemma}[theorem]{Lemma}
\newtheorem{proposition}[theorem]{Proposition}
\theoremstyle{definition}
\newtheorem{definition}[theorem]{Definition}
\newtheorem{remark}[theorem]{Remark}
\newtheorem{example}[theorem]{Example}
\newcommand{\NN}{\mathbb{N}}
\newcommand{\ZZ}{\mathbb{Z}}
\newcommand{\CC}{\mathbb{C}}
\renewcommand{\AA}{\mathbb{A}}
\newcommand{\PP}{\mathbb{P}}
\newcommand{\FF}{\mathcal{F}}
\newcommand{\Q}{\mathcal{Q}}
\newcommand{\OO}{\mathcal{O}}
\newcommand{\II}{\mathcal{I}}
\newcommand{\HHom}{\mathcal{H}om}
\newcommand{\Sym}{\operatorname{Sym}}
\newcommand{\IN}{\operatorname{in}}
\newcommand{\codim}{\operatorname{codim}}
\newcommand{\HH}{\mathbb{H}}
\newcommand{\BH}{\operatorname{BH}}
\renewcommand{\OO}{\mathcal{O}}
\newcommand{\EE}{\mathcal{E}}
\newcommand{\UU}{\mathcal{U}}
\newcommand{\RR}{\mathcal{R}}
\renewcommand{\FF}{\mathcal{F}}
\newcommand{\Spec}{\operatorname{Spec}}
\newcommand{\Proj}{\operatorname{Proj}}
\newcommand{\Hom}{\operatorname{Hom}}
\newcommand{\Ext}{\operatorname{Ext}}
\newcommand{\CHom}{\operatorname{CHom}}
\newcommand{\GL}{\operatorname{GL}}
\newcommand{\Gr}{\operatorname{Gr}}
\newcommand{\oGr}{\operatorname{OGr}}
\newcommand{\LL}{\mathcal{L}}
\newcommand{\MM}{\mathcal{M}}
\newcommand{\tr}{\frac{1}{d}\operatorname{tr}}
\begin{document}

\title[Construction of algebraic covers]{Construction of algebraic covers \\ \tiny{Cover Homomorphisms}}

\author{Eduardo Dias}
\address{CMUP, Centro de Matemática da Universidade do Porto,  Portugal}
\email{eduardo.dias@fc.up.pt}

\thanks{This research was supported by FCT (Portugal) under the project 
PTDC/MAT-GEO/2823/2014 and by CMUP (UID/MAT/00144/2019),
which is funded by FCT with national (MCTES) and European structural funds through the programs FEDER,
under the partnership agreement PT2020.}

\begin{abstract}
Let $Y$ be an algebraic variety, $\FF$ a locally free sheaf of $\OO_Y$-modules, and $\RR(\FF)$ the $\OO_Y$-algebra $\Sym^\bullet \FF$. In this paper we study local properties of sheaves of $\OO_{\RR(\FF)}$-ideals $\II$ such that $\RR(\FF)/\II$ is an algebraic cover of $Y$. 
Following the work of Miranda for triple covers, for $\Q$ a direct summand of $\RR(\FF)$, we say that a morphism $\Phi\colon \Q\rightarrow\RR(\FF)/\langle\Q\rangle$ is a covering homomorphism if it induces such an ideal. 
As an application we study in detail the case of Gorenstein covering maps of degree $6$ for which the direct image of $\varphi_*\mathcal{O}_X$ admits an orthogonal decomposition. These are deformation of $S_3$-Galois branch covers.
\end{abstract}

\maketitle

\section{Introduction}\label{Introduction}
The aim of this article is to investigate the algebra of covering maps in algebraic geometry, i.e. finite flat morphisms of degree $d$ between algebraic varieties. Given a covering map $\varphi\colon X\rightarrow Y$, $\varphi_*\OO_X$ is a coherent sheaf of $\OO_Y$-algebras whose $\OO_Y$-structure corresponds to an associative and commutative map 
$m\colon \varphi_*\OO_X\otimes_{\OO_Y}\varphi_*\OO_X\rightarrow\varphi_*\OO_X$. Our goal is to study this multiplication and Theorem \ref{thmglobal} is essentially the following.

\begin{theorem}
A commutative and associative $\OO_Y$-algebra structure on $\varphi_*\OO_X$, for $\varphi\colon X\rightarrow Y$ a covering map, is given by a pair $(\FF,\Phi)$, where 
\begin{enumerate}
    \item $\FF$ is a locally free sheaf of $\OO_Y$-modules such that $\varphi_*\OO_X$ is a direct summand of $\RR(\FF):=\bigoplus_{n=0}^\infty \Sym^n\FF$;
    \item $\Phi\in \Hom(\Q,\varphi_*\OO_X)$ is a \textit{covering homomorphism}, where $\Q$ is a direct summand of $\RR(\FF)$ such that $\varphi_*\OO_X=\RR(\FF)/\langle\Q\rangle$.
\end{enumerate}
\end{theorem}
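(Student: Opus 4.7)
The plan is to establish a two-way correspondence between commutative associative $\OO_Y$-algebra structures on $\varphi_*\OO_X$ and pairs $(\FF,\Phi)$ of the type stated, constructing each direction explicitly.

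In the direction from a pair $(\FF,\Phi)$ to an algebra, the two direct summand hypotheses supply a decomposition $\RR(\FF) = \varphi_*\OO_X \oplus \Q$ of $\OO_Y$-modules. Using $\Phi$, I build the $\OO_Y$-linear map $\pi \colon \RR(\FF) \to \varphi_*\OO_X$ defined by $\pi(a+q) = a + \Phi(q)$ for $a \in \varphi_*\OO_X$ and $q \in \Q$, and transport the multiplication of $\RR(\FF)$ by setting $a \cdot b := \pi\bigl(a \cdot_{\RR(\FF)} b\bigr)$ for $a,b \in \varphi_*\OO_X \subset \RR(\FF)$. Commutativity is immediate from that of $\RR(\FF)$. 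For associativity, since $\pi|_{\varphi_*\OO_X} = \mathrm{id}$, the identity $\pi(\pi(ab)\cdot c) - \pi(abc) = -\pi\bigl((ab - \pi(ab))\cdot c\bigr)$ shows that associativity is equivalent to $\ker\pi$ being an ideal of $\RR(\FF)$. A direct calculation exhibits $\ker\pi$ as the graph of $-\Phi$ in $\varphi_*\OO_X \oplus \Q$, and the requirement that this graph be an ideal is what I take as the defining property of a \emph{covering homomorphism}.

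In the reverse direction, given the $\OO_Y$-algebra structure on $A := \varphi_*\OO_X$, I pick a locally free $\FF$ with a morphism $\FF \to A$ whose induced algebra map $\RR(\FF) \to A$ is surjective and under which $A$ remains an $\OO_Y$-summand of $\RR(\FF)$; the simplest valid choice is $\FF = A$, embedded as $\Sym^1 \FF \subset \RR(\FF)$. Any complement $\Q$ yields $\RR(\FF) = A \oplus \Q$, and I define $\Phi$ as the restriction to $\Q$ of the algebra surjection $\RR(\FF) \to A$. Then the kernel of this surjection is, by construction, the graph of $-\Phi$, so $\Phi$ satisfies the covering homomorphism condition; composing with the first construction recovers the original algebra on $A$.

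The main obstacle I anticipate is making the covering homomorphism condition explicit and checking it is self-consistent. Since $\RR(\FF)$ is generated in degree one by $\FF$, the ideal condition on $\ker\pi$ reduces to $\FF \cdot \ker\pi \subset \ker\pi$; unpacking this through the decomposition $\RR(\FF) = \varphi_*\OO_X \oplus \Q$ yields concrete $\OO_Y$-linear identities relating $\Phi$ with the natural multiplication maps $\FF \otimes \varphi_*\OO_X \to \RR(\FF)$ and $\FF \otimes \Q \to \RR(\FF)$. Verifying that these identities are the whole content of associativity, that they specialize to Miranda's triple cover relation when $\FF$ has rank $2$, and that the construction is sufficiently canonical to sheafify properly, will be the bulk of the technical work.
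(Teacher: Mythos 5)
Your forward direction is fine as far as it goes: transporting the multiplication of $\RR(\FF)$ along $\pi(a+q)=a+\Phi(q)$ and observing that associativity is exactly the condition that $\ker\pi$ be an ideal is a clean reformulation, and it is essentially why the paper can say this direction is ``direct''. The genuine gap is that you then \emph{declare} ``$\ker\pi$ is the graph of $-\Phi$ and is an ideal'' to be the definition of a covering homomorphism, whereas in the paper this term has a specific, stronger-looking meaning (Definition \ref{coverhomomorphism}): for local bases $\{z_j\}$ of $\FF^\vee$ and $\{q_i\}$ of $\Q^\vee$, the ideal generated by the polynomials $q_i-\sum_j(\Phi^*\overline z^{\,j})(q_i)\overline z^{\,j}$ must be an \emph{extension} (flat deformation, same resolution format) of the initial ideal $\langle q\rangle$, in the sense of Section \ref{iqrelations}. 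Identifying your graph-ideal/associativity condition with that deformation condition is precisely the substantive content — it is what makes the quotient locally free of rank $d$, what lets Lemma \ref{CMvsAss} and Proposition \ref{quadeq} control $\Phi$ by the ideal of $q$-relations, and what produces the embedding $X\hookrightarrow\AA(\FF^\vee)$ asserted in Theorem \ref{thmglobal}. You explicitly defer this (``the bulk of the technical work''), so the proposal proves a correct but essentially tautological statement about your own notion, not the theorem as the paper means it.

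The converse direction also deviates in a way that matters. The paper does not take $\FF=\varphi_*\OO_X$; it invokes the trace splitting $\varphi_*\OO_X=\OO_Y\oplus\EE$ of Lemma \ref{tfsplitting} and takes $\FF=\EE$ (rank $d-1$) with $\Q=\Sym^2\EE$, so that the multiplication map $\Sym^2\EE\to\OO_Y\oplus\EE$ is the covering homomorphism and the initial ideal is generated in degree $2$. Your choice $\FF=\varphi_*\OO_X$ cannot be made compatible with the side condition $\Q\cap(\OO_Y\oplus\FF)=0$ in Definition \ref{coverhomomorphism}: that condition forces $\OO_Y\oplus\FF$, of rank $d+1$, to inject into $\RR(\FF)/\Q\cong\varphi_*\OO_X$, of rank $d$, which is impossible; concretely, the relation identifying the unit of the algebra with the degree-one copy of $1$ is a linear element of your complement $\Q$. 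So to repair the argument you should (i) work with the trace-zero summand $\EE$ as the paper does, and (ii) prove — not postulate — that your associativity/ideal condition on $\Phi$ is equivalent to the extension condition of Definition \ref{coverhomomorphism}, e.g.\ via Lemma \ref{CMvsAss}(\ref{2}), which says the resolutions of $\II_X$ and $\IN(\II_X)$ have the same format.
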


The morphism $\Phi\in \Hom(\Q,\varphi_*\OO_X)$ is a covering homomorphism if, for any local basis $\{q_j\}$ of $\Q$, the sheaf of $\OO_{\RR(\FF)}$-ideals locally generated by the polynomials
\begin{equation}\label{eqintro}
    q_j-\Phi(q_i)
\end{equation} 
is a sheaf of $\OO_{\RR(\FF)}$-ideals with the same codimension as $\langle Q\rangle$. If we denote by $\II_X$ this ideal, then $\OO_X$ is the scheme $\RR(\FF)/\II_X$. 
The multiplication of elements in this scheme is trivially commutative and associative.

This result follows the approach of Miranda for triple covers in \cite{triple}. After proving that for a covering map of degree $3$ we have a decomposition $\varphi_*\OO_X=\OO_Y\oplus\EE$, where $\EE$ is a locally free sheaf of $\OO_Y$-modules with rank $2$, Miranda goes further and proves that, amongst other important results, an associative multiplication in $\varphi_*\OO_X$ is determined by a morphism $\Phi\in\Hom(\Sym^2\EE,\EE)$. The morphism from $\Sym^2\EE$ to $\OO_Y$ is constructed from $\Phi$.

In the same line of thought, in Proposition \ref{quadeq}, we prove that if $\Q$ is a direct summand of $\Sym^2\FF$ and the first syzygy matrix of the ideal defined by the equations (\ref{eqintro}) is linear, then a covering homomorphism is determined by a morphism in $\Hom(\Q,\FF)$.

Looking at the local resolution of a of $\RR(\FF)$-ideals defining $\OO_X$ is the difference of our approach from the method used for triple covers and quadruple covers by Miranda and Hahn, \cite{quadruple}. Important to mention that it is not new, Miranda and Pardini in \cite{PardiniTriple} notice that triple covers are locally determinantal varieties. We will exploit this fact in Section \ref{algebraiccoverssection} and, instead of looking at cover homomorphisms as morphisms that determine an associative multiplication, define a cover homomorphism as one that determines a sheaf of $\RR(\FF)$-ideals that are a flat deformation of an ideal sheaf $\langle\Q\rangle\subset \RR(\FF)$. This approach uses the theory of \textit{"Hilbert scheme of extensions"} of a given ring, following the article \textit{Infinitesimal view if extending a hyperplane section -- deformation theory and computer algebra} by Miles Reid.

In Section \ref{GorensteinCovers} we apply the method to Gorenstein covering maps. An algebraic cover $\varphi\colon X\rightarrow Y$ is called Gorenstein if the fibre $X_y$ is Gorenstein for every point $y\in Y$. Our method is based on the global structure of $\varphi_*\OO_X$ so, in Theorem \ref{socle}, we prove that if there is a line bumble $\LL$ on $Y$ such that $\omega_Y=\LL^{k_Y}$ and $\omega_X=\varphi^*\LL^{k_X}$, for integers $k_Y, k_X$, then $\varphi_*\OO_X$ is isomorphic to its dual. In particular $\varphi_*\OO_X$ decomposes as 
$$\varphi_*\OO_X= \OO_Y\oplus \FF\oplus\MM$$ 
for a sheaf $\FF$ and a line bundle $\MM$. There exists a structure theorem for these algebraic covers by Casnati and Ekedahl in \cite{casnatiI}. Our result does not apply to every Gorenstein covering map as their does, but, if $X$ and $Y$ satisfy the property described, the Theorem can be used to explicitly describe the equations defining $X$.

In Section \ref{Codimension4} we analyse a Gorenstein covering map of degree $6$ satisfying 
$$\varphi_*\OO_X=\OO_Y\oplus M\oplus M\oplus \wedge^2M,$$ 
where $M$ is a simple $\OO_Y$-sheaf of rank $2$. Although its fibre is given by a codimension $4$ Gorenstein ideal, codimension for which there is no structure theorem, the conditions imposed on $\varphi_*\OO_X$ enable us to determine the local equations defining the multiplication in $\varphi_*\OO_X$ via an explicit SageMath \cite{SAGE} algorithm (presented in the Appendix). Theorem \ref{maintheorem4} states that this local structure is given by the polynomials whose vanishing defines the spinor embedding of the orthogonal Grassmann variety $\oGr(5,10)$. 


In Theorem \ref{galoisthm} we study a linear component of these degree $6$ Gorenstein covers and see that they are a family of $S_3$-Galois branched covers. The algebraic structure of Galois $G$-covers, for an abelian group $G$, has been famously studied by Pardini in \cite{PardiniAbelian}. In that case one can explicitly describe the algebraic cover by the branching data satisfying covering conditions. Non-abelian $G$-covers have been studied by Tokunaga for $G$ a Dihedral group \cite{DihedralCovers,DihedralCoversApp} and the case of $S_3$-covers was examined before by Easton in \cite{S3covers}.

Hahn and Miranda found that the local structure for covering spaces of degree $4$ is given by the polynomials defining the Pl\"uker embedding of the Grassmanian $\Gr(2,6)$ in $\PP^{14}$. This result and the connection of Gorenstein covers of degree $6$ and $\oGr(5,10)$ might happen by chance, i.e. smooth structure of the system of parameters and the right codimension may imply the connection with these Grassmann varieties. We believe that there is a geometric relation and understanding it might be the perspective needed to study higher codimension Gorenstein varieties.

Throughout the paper $k$ will denote a characteristic $0$ algebraically closed field.

\section{Algebraic Covers}\label{algebraiccoverssection}
\begin{definition}
Let $X, Y$ be schemes over $k$.
A covering map of degree $d$ is a flat and finite morphism $\varphi\colon X \rightarrow Y$, such that $\varphi_*\OO_X$ is a sheaf of locally free $\OO_Y$-modules with rank $d$.
\end{definition}

A well known result is that for a covering map of degree $d$, the trace map $\tr\colon\varphi_*\OO_X\rightarrow\OO_Y$ gives rise to a splitting $\varphi_*\OO_X=\OO_Y\oplus \EE$, where $\EE=\ker\left(\tr\right)$.
The result is true for fields with characteristic prime with $d$ but, as we want our result to be applied for general exponents, we will stick to the characteristic $0$ case.

\subsection{Local analysis}\label{localanalysis}

Assume in this section that $Y$ is a local scheme over $k$, \mbox{specifically}, let $\OO_Y$ be a local, Noetherian, $k$--algebra and $\OO_X$ a free sheaf of $\OO_Y$--algebras with rank $d$. Furthermore, let $\FF$ be a free sheaf of $\OO_Y$--modules such that $\OO_X$ is a \textit{direct summand} of $\RR(\FF)\cong \bigoplus_{n=0}^\infty \Sym^n\FF$, i.e.
there is a surjective morphism 
$$
\pi\colon \bigoplus_{n=0}^\infty \Sym^n\FF= \RR(\FF)\rightarrow \OO_X,
$$
and a section $$\sigma\colon \OO_X\rightarrow \RR(\FF).$$ 
We have then a isomorphism $\OO_X\cong \RR(\FF)/ \II_X$, where $\II_X$ is a sheaf of $\RR(\FF)$-ideals. Our method to describe the algebra structure on $\OO_X$, given its description as an $\OO_Y$--module, is to study, for a given $\FF$, all possible sheaves of ideals $\II_X$.

Let $\{z_1,\dots,z_r\}$ be a choice of basis for $\FF$ so that $\RR(\FF)\cong\OO_Y[z_i]$. Given an element $f\in\RR(\FF)$ we use this isomorphism to define $\operatorname{in}(f)$ as the sum of the terms with maximal degree in the variables $z_i$ of $f$, and the initial ideal of $\II_X$ as
$$\operatorname{in}(\II_X)=\langle \operatorname{in}(f)\colon f\in \II_X\rangle.$$

In the next Lemma we prove that the ideal $\operatorname{in}(\II_X)$ is well defined, i.e. for different basis chosen for $\FF$, the ideals $\IN(\II_X)$ are isomorphic. 

\begin{lemma}\label{CMvsAss}
Let $\OO_Y$ be a Noetherian local $k$-algebra, $\OO_X$ a finite flat $\OO_Y$--algebra, and $\FF$ a free $\OO_Y$--module with rank $r$ such that $\OO_X$ is a direct summand of $\RR(\FF)$. Then 
\begin{enumerate}
 \item\label{1} the ideal sheaf $\II_X$ is Cohen-Macaulay;
 \item\label{1,5} the ideal $\IN(\II_X)$ is well defined;
 \item\label{2} the free resolution of the ideal $\operatorname{in}(\II_X)$ has the same length as the one for $\II_X$, in particular $\operatorname{in}(\II_X)$ is also CM.
\end{enumerate}
\end{lemma}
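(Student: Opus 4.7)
The plan is to address (2) first (it is essentially definitional), then (1) (which supplies the structural backbone), and finally (3) (which will follow from (1) by a Gröbner/Rees flat-family argument).

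For (2), observe that $\IN(f)$ depends only on the intrinsic $\NN$-grading $\RR(\FF)=\bigoplus_{n\ge 0}\Sym^n\FF$. Explicitly, for $f\in\RR(\FF)$ there is a unique smallest $N(f)$ with $f\in\bigoplus_{n\le N(f)}\Sym^n\FF$, and $\IN(f)$ is the projection of $f$ onto the summand $\Sym^{N(f)}\FF$. A change of basis for $\FF$ induces a graded $\OO_Y$-algebra automorphism of $\RR(\FF)$ that preserves every summand $\Sym^n\FF$, and thus commutes with $\IN$. Hence $\IN(\II_X)$ is independent of the chosen basis.

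For (1), I would exhibit an explicit free $\RR(\FF)$-resolution of $\OO_X$ of length equal to $r=\rank\FF=\codim\II_X$. The base change
$$
\RR(\FF)\otimes_{\OO_Y}\OO_X\;\cong\;\OO_X[z_1,\dots,z_r]
$$
is $\RR(\FF)$-free of rank $d$ because $\OO_X$ is $\OO_Y$-free of rank $d$. The elements $z_1-\pi(z_1),\dots,z_r-\pi(z_r)$ form a regular sequence on $\OO_X[z_1,\dots,z_r]$: the $\OO_X$-linear change of variables $w_i=z_i-\pi(z_i)$ turns them into the polynomial generators $w_1,\dots,w_r$. Consequently the associated Koszul complex resolves $\OO_X[z_i]/(z_i-\pi(z_i))\cong\OO_X$. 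Viewed as a complex of $\RR(\FF)$-modules, each term is $\RR(\FF)$-free of rank $d\binom{r}{i}$ and every differential is $\RR(\FF)$-linear, yielding a length-$r$ free $\RR(\FF)$-resolution of $\OO_X$. Auslander--Buchsbaum applied locally at a maximal ideal of $\RR(\FF)$ containing $\II_X$, combined with the self-duality of the Koszul complex, pins down $\operatorname{pd}_{\RR(\FF)}(\OO_X)=\operatorname{grade}(\II_X)=r$. Hence $\II_X$ is perfect of grade $r$, i.e.\ Cohen--Macaulay.

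For (3), apply a Gröbner/Rees degeneration. Introduce a new variable $t$ and homogenize each $f=\sum_{n\le N}f_n\in\II_X$ (with $f_n\in\Sym^n\FF$ and $N=N(f)$) to $\tilde f(t)=\sum_n t^{N-n}f_n\in\RR(\FF)[t]$; let $\tilde\II\subset\RR(\FF)[t]$ be the ideal generated by all such $\tilde f$. The rescaling $z_i\mapsto t_0 z_i$ identifies the fiber at $t=t_0\ne 0$ with $\RR(\FF)/\II_X=\OO_X$, while the fiber at $t=0$ is $\RR(\FF)/\IN(\II_X)$; a standard Hilbert-function check shows that $\RR(\FF)[t]/\tilde\II$ is $k[t]$-flat. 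Lifting the free resolution of $\OO_X$ from (1) to a minimal free $\RR(\FF)[t]$-resolution of $\RR(\FF)[t]/\tilde\II$ and specializing at $t=0$ then produces a free resolution of $\RR(\FF)/\IN(\II_X)$ with the same Betti numbers as the one for $\RR(\FF)/\II_X$. In particular $\IN(\II_X)$ is also Cohen--Macaulay with a resolution of the same format.

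The principal technical obstacle sits in (3): verifying the $k[t]$-flatness of $\RR(\FF)[t]/\tilde\II$ and ensuring that a minimal free resolution lifts to the family and specializes minimally at both $t=0$ and $t=1$. This is the standard Gröbner-basis content behind Betti-number invariance under passage to an initial ideal.
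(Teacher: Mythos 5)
Parts (\ref{1}) and (\ref{1,5}) of your proposal are essentially sound. Your (\ref{1,5}) is the clean intrinsic observation: $\IN$ only uses the grading $\RR(\FF)=\bigoplus_n\Sym^n\FF$, which no linear change of basis of $\FF$ disturbs; note, though, that the paper's proof also allows affine reparametrizations $\Psi\colon\FF\rightarrow\OO_Y\oplus\FF$ (translations by elements of $\OO_Y$, used later to pass to trace-free bases), which your graded argument does not cover and under which $\IN(\II_X)$ is only preserved up to the linear automorphism $\Psi_2$. Your (\ref{1}) takes a genuinely different route: the paper exhibits the regular sequence $z_j^n-\sigma\circ\pi(z_j^n)$, $1\leq j\leq r$, inside $\II_X$ and combines $\operatorname{depth}\geq r$ with $\codim\II_X=r$, whereas you resolve $\OO_X$ by the Koszul complex on $z_i-\pi(z_i)$ over $\OO_X[z_1,\dots,z_r]$, viewed as free $\RR(\FF)$-modules. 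This works, but be aware that Auslander--Buchsbaum only pins down $\operatorname{pd}_{\RR(\FF)}(\OO_X)=r$; perfection still requires $\operatorname{grade}(\II_X)\geq r$, i.e.\ $\Ext^i_{\RR(\FF)}(\OO_X,\RR(\FF))=0$ for $i<r$. Your appeal to Koszul self-duality does yield this, but only after computing $\Hom_{\RR(\FF)}\bigl(\OO_X[z_i],\RR(\FF)\bigr)\cong\Hom_{\OO_Y}(\OO_X,\OO_Y)\otimes_{\OO_X}\OO_X[z_i]$ and checking that $z_i-\pi(z_i)$ is a regular sequence on it; that step should be written out, as it is where the content lies.

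The genuine gap is in (\ref{2}). First, the flatness of $\RR(\FF)[t]/\tilde\II$ over $k[t]$ cannot be settled by "a Hilbert-function check": the base $\OO_Y$ is an arbitrary Noetherian local $k$-algebra, so there is no Hilbert function to compare. The correct (and easy) argument is that $\tilde\II$ is generated by the homogenizations of \emph{all} elements of $\II_X$, hence is $t$-saturated; thus $t$, and then every nonzero polynomial in $t$, is a nonzerodivisor on $\RR(\FF)[t]/\tilde\II$, which is exactly flatness over the PID $k[t]$. Second, and more seriously, your final step runs in the wrong direction: you propose to lift the resolution of the fiber $\OO_X$ to a minimal resolution of the family and to have it specialize minimally at both $t=0$ and $t=1$, invoking "Betti-number invariance under passage to an initial ideal". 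That principle is false in general (one only has semicontinuity, $\beta_{ij}(\IN(I))\geq\beta_{ij}(I)$), and lifting a fiber resolution to the family is precisely the nontrivial matter this paper treats separately (Proposition \ref{liftresolution}: one must lift the syzygies, with obstructions in $\Ext^1$), so it cannot be deferred as standard. None of this is needed for the Lemma: take any finite free resolution of $\RR(\FF)[t]/\tilde\II$ over $\RR(\FF)[t]$ and reduce it modulo $(t)$ and modulo $(t-1)$; since both are nonzerodivisors on $\RR(\FF)[t]/\tilde\II$, both reductions stay exact and give free resolutions of $\RR(\FF)/\IN(\II_X)$ and of $\OO_X$ of the same format, with no minimality claims required. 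Cohen--Macaulayness of $\IN(\II_X)$ then follows as in (\ref{1}), since $z_1^n,\dots,z_r^n\in\IN(\II_X)$ is a regular sequence and $\codim\IN(\II_X)\leq r$. This is the paper's argument, with $t$ playing the role of its homogenizing variable $z_0$.
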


\begin{proof}
\ref{1}:
As $\OO_X$ is a finite flat $\OO_Y$-algebra, $\dim \OO_X=\dim \OO_Y\Rightarrow\codim(\II_X)=r$. Take $n\in\NN$ large enough such that $\Sym^n\FF \cap\sigma\left(\OO_X\right)=\{0\}$ (consider the isomorphism $\RR(\FF)\cong \OO_Y[z_i]$ and take $n\gg 0$ such that $\sigma\left(\OO_X\right)$ is generated by $z_1^{i_1}z_2^{i_2}\cdots z_r^{i_r}$ with $\sum i_j< n$). 

Then the sequence $(r_j):=(z_j^n-\sigma\circ \pi(z_j^n))_{1\leq j\leq r}$ is an $\RR\left(\FF\right)$--sequence, just check that the quotient $\OO_Y[z_i]/\langle z_j\rangle_{j=1}^k$ is a finitely generated algebra over the ring $\OO_Y[z_i]_{i=k+1}^r$, contained in the ideal $\operatorname{in}(\II_X)$. we conclude that
$$r\leq \operatorname{depth}(\II_X)\leq \codim(\II_X)=r,$$
which asserts that $\II_X$ is CM.

\ref{1,5}:
Denote by $\IN(\II_X)_{(z_i)}$ the initial ideal when defined with respect to the basis $\{z_i\}$ for $\FF$. A change of basis for $\RR\left(\FF\right)$ is given by a $\OO_Y$-linear map, $\Psi\colon \FF\rightarrow \OO_Y\oplus \FF$. Decomposing $\Psi$ as $\Psi_1\oplus\Psi_2$, where $\Psi_1\colon\FF\rightarrow \OO_Y$ and $\Psi_2\colon\FF\rightarrow\FF$, we have that $\Psi_2$ is an isomorphism which sends $\IN(\II_X)_{\{z_i\}}$ into $\IN(\II_X)_{\{\Psi(z_i)\}}$ so we are done. 

\ref{2}: 
Denote by $\widetilde{\II}_X$ the ideal obtained by the change of variables $z_i\mapsto z_i/z_0$ to homogenise $\II_X$.  As $z_0$ is a non-zero divisor in $\widetilde{\II}_X$, taking the tensor of a minimal free resolution of it with $\OO_Y[z_i]/(z_0)$ gives a resolution of the ideal $\operatorname{in}(\II_X)$, so we have the right $\operatorname{depth}$. As codimension of $\operatorname{in}(\II_X)$ is smaller or equal to $r$ we are done.  
\end{proof}

Consider that we have the injective and surjective morphisms $\OO_Y\hookrightarrow\RR(\FF)\twoheadrightarrow  \OO_X$,
for some free $\OO_Y$--sheaf $\FF$ with basis $\{z_1,\dots, z_r\}$, and suppose that $\{q_j(z_i)\}$ are homogeneous polynomials generating the ideal $\IN(\II_X)$. We want to do the reserve of the trick used to prove part $(\ref{2})$ of the previous Lemma. To be more specific, we want to determine the relations between (token) coefficients $c_{ij}\in\OO_Y$, $(i,j)\in \NN\times\NN^r$, such that the 
ring $R=\OO_Y[z_i]_{i=0}^r/\langle f_j\rangle_j$, where  
$$f_i:= q_i-\sum_j c_{ij}\overline{z}^jz_0^{(\deg(q_i)-\sum_k j_k)},\,\hspace{5mm} \overline{z}^j=z_1^{j_1}\cdots z_r^{j_r},$$
is a flat deformation of $\overline{R}=\OO_Y[z_i]_{i=1}^r/\IN(\II_X)$, i.e. $z_0$ is a non-zero divisor in $R$, of degree $1$, and $\overline{R}= R/\langle z_0\rangle$.

The goal of the next Section is to show a computational algorithm to determine the relations between the $c_{ij}$ given an ideal $\II_X$. Notice that by Nakayama's Lemma, the image of the local generators of $\II_X$ in $\II_X\otimes k$ are the generators of $\II_X\otimes k$. Hence, in the next Section, we use this fact and study the algebra of $d$ points over $\Spec(k)$ defined by polynomials whose terms with maximal degree are known. 

\subsection{Hilbert Scheme of Extensions}\label{iqrelations}

In this section we follow the paper \cite{extending} by Miles Reid, where the following extension problem is tackled.
\textit{Given a Noetherian graded ring $\overline{R}$ and $a_0$ a non negative integer, determine the set of all pairs $(R,z_0)$, where $R$ is a graded ring and $z_0\in R$ is a homogeneous, nonzero divisor, of degree $a_0$ such that $R/(z_0)=\overline{R}$.} We call $R$ an extension of $\overline{R}$.

The hyperplane section principle (see \cite[$\S 1.2$]{extending}) says that the generators of $R$, their relations and syzygies reduce modulo $z_0$ to those of $\overline{R}$ and occur in the same degrees. In particular, if $\overline{R}=\overline{S}/\overline{I}$, where $\overline{S}=k[z_1,\dots,z_r]$, and we have an exact sequence 
\begin{equation}\label{resolutionquot}
0\leftarrow
\overline R\leftarrow \overline{S}\xleftarrow{f}\bigoplus\overline{S}(-a_i) \xleftarrow{l} \bigoplus\overline{S}(-b_j),
\end{equation}
with $a_i,b_j$ positive integers, $f$ a vector with the generators of $\overline{I}$ as entries, $l$ its syzygy matrix, then the resolution of a ring $R$, such that $\overline{R}=R/(z_0)$, starts with the terms 
\begin{equation}\label{resolutionquot2}
0\leftarrow
R\leftarrow S\xleftarrow{F}\bigoplus S(-a_i) \xleftarrow{L} \bigoplus S(-b_j),
\end{equation}
where $S=k[z_0,\dots,z_r]$, $f(z_1,\dots,z_r)=F(0,z_1,\dots,z_r)$, and $l(z_1,\dots,z_r)=L(0,z_1,\dots,z_r)$. The result can be summarised in the following way. 

\begin{proposition}\label{liftresolution}
Given a ring $\overline R=\overline{S}/\overline{I}$ with a presentation as in (\ref{resolutionquot}), if every relation 
$\sigma_j:=\left(\sum_i l_if_i\equiv 0\right)$ 
lifts to a relation $\Sigma_j:=\left(\sum_iL_iF_i\equiv 0\right)$, then the resolution lifts to a resolution of $R$ and $\overline R=R/(z_0)$. 
\end{proposition}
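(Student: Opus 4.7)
The plan is to prove three things in order: (i) that $z_0$ is a non-zero-divisor on $R:=S/I$, where $I=(F_1,\dots,F_m)$; (ii) that $R/(z_0)\cong \overline R$; and (iii) that $\operatorname{image}(L)=\ker(F)$, so that the lifted sequence is genuinely exact at the middle.

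First I would establish (i) by a lifting argument. Suppose $z_0 g=\sum_i h_i F_i$ for some $g,h_i\in S$. Reducing modulo $z_0$ gives $\sum_i \bar h_i f_i=0$ in $\overline S$, so $(\bar h_1,\dots,\bar h_m)$ is a syzygy of $(f_i)$ and hence lies in the image of $l$; write $(\bar h_i)=\sum_j \bar g_j\, l_{ij}$. Choose any lifts $g_j\in S$ of $\bar g_j$; then $(h_i)-\sum_j g_j L_{ij}$ reduces to $0$ mod $z_0$, so there exist $h'_i\in S$ with $h_i=\sum_j g_j L_{ij}+z_0 h'_i$. Invoking the hypothesis $\Sigma_j=\sum_i L_{ij}F_i=0$, one gets $z_0 g=z_0\sum_i h'_iF_i$, and since $S$ is a domain, $g\in I$. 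Step (ii) is then immediate: $R/(z_0)=S/(I,z_0)$, and the generators $F_i$ of $I$ reduce to the generators $f_i$ of $\overline I$, so $R/(z_0)=\overline S/\overline I=\overline R$.

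For (iii), I would reuse the same lifting trick. Given $(h_i)\in\ker F$, reducing mod $z_0$ produces a syzygy of $(f_i)$, which by the presentation (\ref{resolutionquot}) of $\overline R$ lies in the image of $l$. Exactly as above, this gives $(h_i)=\sum_j g_j L_{ij}+z_0(h'_i)$, and the hypothesis together with the fact that $S$ is a domain forces $\sum_i h'_iF_i=0$, i.e.\ $(h'_i)\in\ker F$ again. Setting $K:=\ker F$ and $K':=\operatorname{image}(L)$, this shows $K=K'+z_0 K$.

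The main obstacle is closing off step (iii): the equation $K=K'+z_0 K$ does not by itself imply $K=K'$ without an additional finiteness or gradedness input. In the setting of \cite{extending} every module involved is finitely generated and graded with $\deg z_0>0$, so graded Nakayama applied to the finitely generated graded module $K/K'$ (on which $z_0$ acts surjectively) gives $K/K'=0$; in a local analogue the ordinary Nakayama lemma plays the same role. Once this is in place, the exactness at $\bigoplus S(-a_i)$ follows, and combined with (i) and (ii) the resolution of $\overline R$ lifts as claimed.
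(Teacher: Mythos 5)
Your argument is correct, and it is worth noting that the paper itself gives no argument here: its ``proof'' is a citation to Reid's hyperplane section principle (\cite[\S 1.2]{extending}), so what you have written is essentially a self-contained rendering of the standard proof of that principle. Steps (i) and (ii) are exactly right: reducing a relation $z_0g=\sum h_iF_i$ modulo $z_0$, pulling the resulting syzygy back through $l$, lifting the coefficients, and using the hypothesis $\sum_i L_{ij}F_i=0$ kills the $L$-part, after which cancelling $z_0$ (valid since $S$ is a domain) shows $z_0$ is a non-zero-divisor on $R$, while $R/(z_0)=\overline S/\overline I=\overline R$ is immediate from $F_i\mapsto f_i$. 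You also correctly identify the one genuinely delicate point: the relation $K=K'+z_0K$ for $K=\ker F$, $K'=\operatorname{im}L$ needs graded Nakayama (or, equivalently, a descending-degree induction), which is available because in the setting of the paper everything is homogeneous and $\deg z_0=a_0>0$ (in the application $a_0=1$); in the degenerate case $a_0=0$ the statement would indeed not follow from your identity alone, so flagging this is appropriate rather than a defect. The only small point left implicit is that you verify exactness only of the displayed two-step presentation, whereas ``the resolution lifts'' refers to the full resolution having the same format; but this follows at once from what you prove, since $z_0$ being regular on $S$ and on $R$ gives $\Tor_i^S(R,S/(z_0))=0$ for $i\geq 1$, so a (minimal) free resolution of $R$ over $S$ reduces modulo $z_0$ to a (minimal) free resolution of $\overline R$ over $\overline S$, which is the sense in which the format is preserved. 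With that one-sentence addendum your proof is complete and matches the intended argument.
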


\begin{proof}\cite[$\S 1.2$ - The hyperplane section principle]{extending}
\end{proof}

To compute $R$ from $\overline{R}$ we use its grading. Define a $k^{th}$ order \textit{infinitesimal extension} of $\overline R$
as a ring $\overline R^{(k)}$ together with a homogeneous element $z_0 \in \overline R^{(k)}$ of degree $a_0$ such that $\overline R = \overline R^{(k)}/(z_0)$, $z_0^{k+1} = 0$, and $\overline R^{(k)}$ is flat over the subring $k[z_0]/(z_0^{k+1})$ generated by $z_0$. 

\begin{definition}The Hilbert scheme of $k^{th}$ order infinitesimal extensions of $\overline{R}$ by a variable of degree $a_0$ is the set
 $$\HH^{(k)}\left(\overline{R},a_0\right)=\left\{(R,z_0)\ |\ \deg(z_0)=a_0,\ R/\left(z_0^{k+1}\right)\cong \overline{R}\right\}.$$
\end{definition}

Notice that if $k\geq \max_i\{\deg(\sigma_i)\}$, then $\HH^{(k)}\left(\overline{R},a_0\right)$ is the solution to the extension problem, denoted by $\HH\left(\overline{R},a_0\right)$. It is constructed as a tower of schemes 
$$\HH\left(\overline{R},a_0\right)\cdots\rightarrow \HH^{(k)}\left(\overline{R},a_0\right)\rightarrow \HH^{(k-1)}\left(\overline{R},a_0\right)\rightarrow \cdots \rightarrow \HH^{(0)}\left(\overline{R},a_0\right)=\text{pt},$$
where each morphism is induced by the forgetful map $k[z_0]/\left(z_0^{k+1}\right)\rightarrow k[z_0]/\left(z_0^k\right)$.

The scheme $\HH^{(k)}\left(\overline{R},a_0\right)$ is independent of a choice of variables so we introduce the following scheme for the sake of computations.

\begin{definition}
We call the following set the big Hilbert scheme of $k^{th}$ order infinitesimal \mbox{extensions} of $\overline R$ by a variable $z_0$ of degree $a_0$
$$\BH^{(k)}\left(\overline R,a_0\right)=\left\{\left.\begin{matrix} F_i=f_i+z_0f'_i+ \cdots +z_0^kf_i^{(k)}, \\ L_{ij}=l_{ij}+z_0l'_{ij}+\cdots +z_0^kl^{(k)}_{ij}\end{matrix}\ \right|\ \Sigma_j:\sum_i L_{ij}F_i \equiv 0 \mod z_0^{k+1}\right\}$$
where $F_i, L_{ij}$ are homogeneous polynomials in the ring $\overline S=k[z_0,\dots,z_r]$, $F_i$ generators of the ideal $\overline I$ where $\overline R =\overline S/ \overline I$, and $L_{ij}$ generators of the relations between the $F_i$. If $k\geq \max_j\{\deg(\Sigma_j)\}$, then the set is the Big Hilbert scheme of infinitesimal extensions of $\overline R$ by a \mbox{variable} $z_0$ of degree $a_0$ and it is denoted by $\BH\left(\overline R,a_0\right)$.
The schemes $\BH\left(\overline R,a_0\right)$, and $\BH^{(k)}\left(\overline{R},a_0\right)$ for each $k$, have the structure of an affine scheme with coordinates given by the coefficients of $f^{(m)}_i$ and $l^{(m)}_{ij}$.
\end{definition}

Given a set of generators for the ideal defining a graded ring $\overline R=\overline S/\overline I$ and a degree $a_0>0$, the coordinates of each point on the big Hilbert scheme $\BH\left(\overline R,a_0\right)$ correspond to a set of generators of an ideal $I$, and the relations between these generators, such that $I\otimes S/(z_0)\cong \overline I$.
In other words, the big Hilbert scheme is a choice of coordinates for the Hilbert scheme and hence, the Hilbert scheme is the quotient of the big Hilbert scheme by linear changes of variables. 

The first order extensions of $\overline{R}$,  $\HH^{(1)}(\overline{R},a_0)$, have a natural one-to-one correspondence with the elements in $\Hom(\overline{I}/\overline{I}^2,\overline{R})_{-a_0}$, see \cite[Thm. $1.10$]{extending} or \cite[Thm. $2.4$]{HartshorneDefTh}. To see the relation, notice that $\BH^{(1)}(\overline{R},a_0)$ is given as the set of elements 
$
\{(f_i+z_0f_i'), (l_{ij}+z_0l_{ij}')\}
$
that for all $j$ satisfy 
$$\sum_i l_{ij}'f_i+\sum_i l_{ij}'f_i = 0 \in S(b_j)_{-a_0},$$
recall that $b_j$ is the degree of $\Sigma_j$, see the Exact Sequence $(\ref{resolutionquot})$. As the second summand is an element of $\overline{I}$, we can rewrite the equality as 
$$\sum_i l_{ij}f_i' \in \overline{R}(b_j)_{-a_0}.$$
Hence we can define a $S$-linear map as $f_i\mapsto f_i'$, i.e. an element of 
$$\Hom\left(\overline{I},\overline{R}\right)_{-a_0}=\Hom\left(\overline{I}/\overline{I}^2,\overline{R}\right)_{-a_0}.$$
With the same reasoning, we can identify the fibers of the morphism $\HH^{(k)}(\overline{R},a_0)\rightarrow\HH^{(k-1)}(\overline{R},a_0)$ with $\Hom(\overline{I}/\overline{I}^2,\overline{R})_{-ka_0}$. Now there is not a one-to-one correspondence, we have to check for obstructions. 

Applying the functor $\Hom(-,\overline{R})$ to the free resolution of $\overline{I}$ (Sequence (\ref{resolutionquot}) truncated on the term $\bigoplus S(-a_i)$), we get the complex
$$0\rightarrow \bigoplus \overline{R}(a_i)\xrightarrow{\delta_0} \bigoplus \overline{R}(b_j)\xrightarrow{\delta_1} \cdots $$
whose homology is 
$\Ext^i_{\overline{R}}(\overline{I}/\overline{I}^2,\overline{R})$. In particular there is an exact sequence 
$$0\rightarrow \Hom(\overline{I}/\overline{I}^2,\overline{R})\rightarrow \bigoplus \overline{R}(a_i)\rightarrow \ker \delta_1\rightarrow \Ext^1_{\overline{R}}(\overline{I}/\overline{I}^2,\overline{R}) \rightarrow 0.  $$

This sequence provides structure to the morphism $\varphi_k\colon \HH^{(k)}\left(\overline{R},a_0\right)\rightarrow \HH^{(k-1)}\left(\overline{R},a_0\right)$. To see so, suppose we have $(k-1)$-order polynomials $F_i=\sum_{m=0}^{k-1}z_0^{m}f_i^{(m)}$ and $L_{ij}=\sum_{n=0}^{k-1}z_0^nl_{ij}^{(m)}$ satisfying $\sum L_{ij}F_i\equiv 0 \mod z_0^k$. The extension of these polynomials to order $k$ has new terms $f_i^{(k)}$ and $l_{ij}^{(k)}$ such that the following equality holds
$$ \sum l_{ij}f_i^{(k)}+\sum_{a=1}^{k-1}\sum l_{ij}^{(a)}f_i^{(k-a)}+ \sum l_{ij}^{(k)}f_i=0.$$
As the last term is in $\overline{I}$, it can be rewritten as 
$$ \sum l_{ij}f_i^{(k)}=-\sum_{a=1}^{k-1}\sum l_{ij}^{(a)}f_i^{(k-a)}=:\psi_j\in \overline{R}(b_j)_{-ka_0}.$$

Take $\psi_j$ as components of a morphism $\psi\colon\BH^{(k-1)}(\overline{R},a_0)\rightarrow \overline{R}(b_j)_{-ka_0}$.
In \cite[Thm. $1.15$]{extending}, it is proven that the morphism factors through a morphism $\Psi\colon\HH^{(k-1)}(\overline{R},a_0)\rightarrow \overline{R}(b_j)_{-ka_0}$ and that its image is contained in the kernel of $\delta_1$. 

Putting the results together we get that the middle square in the diagram
\begin{equation}\label{equationMilesmaintheorem}
\begin{array}{l}
\xymatrix{ 
 & \HH^{(k)}\left(\overline{R},a_0\right) \ar[d]\ar[r]^{\varphi_k} & \HH^{(k-1)}\left(\overline{R},a_0\right) \ar[d] \ar[rd]^{\Psi} &  \\
0 \ar[r] & \Hom_{\overline{R}}(\overline{I}/\overline{I}^2,\overline{R})_{-ka_0} \ar[r] & \bigoplus\overline{R}(b_j)_{-ka_0} \ar[r]^{\delta_0} & (\ker \delta_1)_{-ka_0} 
} \\
\hspace{8cm}\xrightarrow{\pi}  \Ext^1_{\overline{R}}(\overline{I}/\overline{I}^2,\overline{R})_{-ka_0} \rightarrow 0
\end{array}
\end{equation}
is Cartesian. In particular, if $\Ext^1_{\overline{R}}(\overline{I}/\overline{I}^2,\overline{R})_{-ka_0}=0$ then there is no obstruction to the extension of $\HH^{(k-1)}(\overline{R},a_0)$ to $\HH^{(k)}(\overline{R},a_0)$.

Back to the context of the paper, we introduce the following definition. 

\begin{definition}\label{iqrelationsdefinition}
 Let $\{q_i\}_{i=1}^m$, be a basis for a homogeneous ideal $q\subset \overline{S}=k[z_j]_{j=1}^r$ such that the ring $\overline{R}=\overline{S}/q$ is a zero-dimensional $k$-algebra. We denote by
 $$\II_q\subset k[c_{ij}],$$ 
 where ${(i,j)\in \NN\times \NN^r}$ and $\sum_ij_i<\deg(q_i)$, the ideal generated by the relations between the $c_{ij}$ which make the ring $R=\overline{S}[z_0]/\langle f_i\rangle$ a flat deformation of $\overline{R}$, where 
 $$f_i:=q_i-\sum c_{ij}\overline{z}^jz_0^{(\deg q_i - \sum j_k)}.$$ 
 We call $\II_q$ the \textit{ideal of $q$-relations}. Notice that $k[c_{ij}]/\II_q\cong \BH(\overline{R},1)$. 
\end{definition}

We now use these tools to describe $\II_q$.
Let $q=(q_1,\dots,q_m)$ be a minimal set of homogeneous generators of an ideal with codimension $r\geq 2$ in $S=k[z_1,\dots,z_r]$. Furthermore, consider that for all $i$, $\deg(q_i)=2$, and the first syzygy matrix of the ideal $(q_1,\dots,q_m)$ is linear. Our goal is to determine the relations between the $c_{ij}$ and $d_{i}$ such that the ring $R=S[z_0]/I$, where
$$I=\left(f_i:=q_i- \left(\sum_{j=1}^rc_{ij}z_j\right)z_0-d_iz_0^2\right)$$
is a flat deformation of $S/q$. As $q_i$ is of degree $2$ and the first syzygy matrix of $q$ is linear, we have the following exact sequence
\begin{equation}\label{fatpointresolution}
 R\leftarrow S[z_0]\xleftarrow{(f_i)}S[z_0](-2)^{\oplus m}\xleftarrow{L} S[z_0](-3)^{\oplus n_2}\leftarrow \cdots \leftarrow S[z_0](-d)^{\oplus n_r}\leftarrow 0.
\end{equation}
Recall that $f_i(0,z_1,\dots,z_r)=q_i$ and $L_{ij}(0,z_1,\dots,z_r)=l_{ij}$. We extend $q_i$ and $l_{ij}$ by powers of $z_0$ using the following algorithm. 

Consider the following matrices
\begin{itemize}
 \item $q:=(q_1,q_2,\dots,q_m)\in \operatorname{Mat}_{1\times m}(\mathcal{A})$,
 \item $\overline z:=(z_1,\dots,z_r)\in \operatorname{Mat}_{(1\times r)}(\mathcal{A})$,
 \item $C:=[c_{ij}]\in \operatorname{Mat}_{r\times m}(\mathcal{A})$,
 \item $D:=[d_{i}]\in \operatorname{Mat}_{1\times m}(\mathcal{A})$,
 \item $l\in \operatorname{Mat}_{m\times n_2}(\mathcal{A})$, the first syzygy matrix of $\overline{I}$,
 \item $N:=[n_{ij}]\in \operatorname{Mat}_{m\times n_2}(\mathcal{A})$.
\end{itemize}
where $\mathcal{A}=k[z_i,c_{ij},d_i,n_{ij}]$. Notice that $\II_q$ is the set of relations between the variables $c_{ij},d_i$ and $n_{ij}$ that make the following equality true
$$\left(q-\overline z Cz_0-Dz_0^2\right)\left(l+Nz_0\right)=0.$$

Decomposing the equality in powers of $z_0$ we will get a tower of schemes 
$$\BH\left(\overline{R},1\right)=\BH^{(3)}\left(\overline{R},1\right)\rightarrow
\BH^{(2)}\left(\overline{R},1\right)\rightarrow\BH^{(1)}\left(\overline{R},1\right)\rightarrow \BH^{(0)}\left(\overline{R},1\right)=\text{pt},$$
where each $\BH^{(k)}\left(\overline{R},1\right)$ contains the relations given by the coefficients of $z_0^k$.

\begin{enumerate}
 \item  $ql=0$,
 
 which is true by definition of syzygy.
 \item  $z_0\left(qN-\left(\overline z C\right)l\right)=0$.
 
 The entries of this matrix are polynomials of degree $2$ in the $\{z_i\}_{i=1}^r$ and linear in $\{n_{ij}, c_{ij}\}$. As the coefficient $n_{ij}$ multiplies by the polynomial $q_i$, and the $\{q_i\}$ are linear independent, we get linear equations identifying each $n_{ij}$ with a linear combination of the $c_{ij}$. As the $n_{ij}$ can appear multiplying more than a single monomial, we also derive linear relations between the $c_{ij}$.
 \item  $z_0^2\left(\overline z CN+Dl\right)=0$. 
 
 As we can write $N$ with the entries of $C$, this equation gives us identities $d_{i}=h_i(c_{ij})$, where the $h_i$ are quadratic polynomials, and quadratic equations between the $c_{ij}$.
 \item  $z_0^3\left(DN\right)=0$.
 
 This last equation should give us cubic equations in the $c_{ij}$ but we shall see that they are already contained in the ideal generated by the equations found before.
\end{enumerate}

\begin{remark}
By step $(3)$ the $d_i$ are determined by the $c_{ij}$, so we consider $\II_q$ to be the ideal defined by the relations between the $c_{ij}$ as they completely parametrize $\BH(\overline{R},1)$.
\end{remark}

\begin{proposition}\label{quadeq}
Let $q=(q_1,\dots,q_m)$ be a minimal set of homogeneous generators of an ideal with codimension $r\geq 2$ in $k[z_1,\dots,z_r]$ such that for all $i$, $\deg(q_i)=2$, and the first syzygy matrix of the ideal $(q_1,\dots,q_m)$ is linear. Then the ideal of $q$-relations $\II_q$ is generated by quadratic polynomials. 
Furthermore, if $q'$ is obtained from $q$ by a linear change of variables, then $\II_q\cong \II_{q'}$. 
\end{proposition}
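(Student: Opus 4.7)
The plan is to execute the tower construction of $\BH(\overline R, 1)$ explicitly by expanding $FL=0$ in powers of $z_0$, where $F:=q-\overline z Cz_0-Dz_0^2$ and $L:=l+Nz_0$ as in the setup preceding the proposition. Since $F$ has $(z_0,\overline z)$-degree $2$ and $L$ has degree $1$, only the four coefficients of $z_0^0,z_0^1,z_0^2,z_0^3$ can contribute, giving exactly the four equations (1)--(4) listed in the text. I would then analyse each of these four equations in turn and keep careful track of the degrees of the resulting relations in the scalar variables $c_{ij},d_i,n_{ij}$.

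The coefficient of $z_0^0$ is $ql=0$, which is automatic. The coefficient of $z_0^1$ is $qN-\overline z Cl=0$, a polynomial identity of degree $2$ in $z_1,\dots,z_r$; equating monomial coefficients gives linear equations in the pair $(c_{ij},n_{ij})$. The hypothesis that the first syzygy matrix $l$ is linear means both sides have matching degree structure, and minimality of $q$ together with the hypothesis $\codim\geq 2$ ensures every admissible right-hand side lies in the image of multiplication by $q$; hence each $n_{ij}$ is solved as a linear form in the $c_{ij}$ with no residual linear constraint on the $c_{ij}$, which is the statement that $\HH^{(1)}(\overline R,1)$ equals the whole parameter space $\Spec k[c_{ij}]$. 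Substituting $N=N(C)$ into the $z_0^2$ coefficient $\overline z CN+Dl=0$ produces an identity that is linear in $d_i$ and quadratic in $c_{ij}$; equating coefficients of the $z_k$ solves the $d_i$ as quadratic polynomials in the $c_{ij}$ and leaves behind the claimed quadratic relations in the $c_{ij}$ alone, which generate $\II_q$.

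The main obstacle is step (4). After all previous substitutions, $DN=0$ is a family of cubic expressions in the $c_{ij}$, and one must show these lie in the ideal generated by the quadratic relations from step (3), i.e.\ that they produce no new generators. The direct route is to start from the step (3) identity $\overline z CN+Dl\equiv 0\pmod{\II_q}$, right-multiply by $N$ to get $\overline z CN^{2}+DlN\equiv 0$, and then use the step (2) identity $qN=\overline z Cl$ together with $ql=0$ and the relations already in $\II_q$ to manipulate $DlN$ and $\overline z CN^2$ into expressions from which $DN\equiv 0\pmod{\II_q}$ drops out. The more conceptual route is to apply Theorem \ref{Milesmaintheorem}: the $z_0^3$ obstruction is a class in $\Ext^1_{\overline R}(\overline I/\overline I^2,\overline R)_{-3}$, and the linearity hypothesis on $l$ forces the relevant strand of the resolution of $\overline R$ to be linear, so this obstruction group vanishes in internal degree $-3$ once the quadratic obstructions of step (3) have been imposed.

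Finally, for the ``furthermore'' clause I would observe that the entire construction is $\GL_r$-equivariant: a linear change of variables $\Psi\in\GL_r$ acting on $z_1,\dots,z_r$ sends $q$ to $q'$, $l$ to the corresponding conjugate $l'$, and the scalar parameters $c_{ij},d_i,n_{ij}$ to their transformed values in a $\GL_r$-covariant way, while preserving the matrix identity $FL=0$. Consequently the identification descends to a linear isomorphism $k[c_{ij}]\cong k[c'_{ij}]$ sending $\II_q$ onto $\II_{q'}$, yielding $\II_q\cong \II_{q'}$ as required.
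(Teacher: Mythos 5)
Your framework (expanding $FL=0$ in powers of $z_0$ and climbing the tower $\BH^{(k)}(\overline R,1)$) is the same as the paper's setup, and your argument for the ``furthermore'' clause is exactly the paper's one-line equivariance remark. The gap is in the heart of the matter: showing that the order-$z_0^3$ equation $DN=0$ contributes nothing new. Your ``direct route'' does not even typecheck: the step-(3) identity $\overline z CN+Dl=0$ is a $1\times n_2$ row vector while $N$ is $m\times n_2$, so right-multiplying by $N$ (and the expressions $N^2$ and $DlN$) is undefined unless $m=n_2$, and no actual cancellation scheme is given. Your ``conceptual route'' correctly locates the obstruction in $\Ext^1_{\overline R}(\overline I/\overline I^2,\overline R)_{-3}$, but the reason you offer for its vanishing --- that linearity of the first syzygy matrix forces the relevant strand of the resolution to be linear --- is unsubstantiated: $\Ext^1$ of $\overline I/\overline I^2$ is not controlled by the first syzygies alone, and linear first syzygies do not make the later terms of the resolution linear. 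The paper's actual mechanism is different and essential: since $q$ is Cohen--Macaulay of codimension $r$ (Lemma \ref{CMvsAss}), a change of rings gives $\Ext^i_{\overline R}(q/q^2,\overline R)\cong\Ext^i_{\overline S}(q,\overline S)\otimes\overline R$, which kills $\Ext^1$ in \emph{all} internal degrees as soon as $r\geq 3$, while $\Hom_{\overline R}(q/q^2,\overline R)_{-3}=0$ simply because the $q_i$ are quadrics. Moreover, for $r=2$ this Ext-module does \emph{not} vanish, so any blanket vanishing claim is false there; the paper must (and does) treat $r=2$ separately by noting that the hypotheses force $q=(z_1^2,z_1z_2,z_2^2)$ and using the determinantal triple-cover structure of Example \ref{triplecoverexample}. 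Your proposal uses neither Cohen--Macaulayness nor the $r=2$ dichotomy, so the key step remains unproven.

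A secondary inaccuracy: at order $z_0^1$ you assert that the $n_{ij}$ are eliminated ``with no residual linear constraint on the $c_{ij}$,'' i.e.\ that $\HH^{(1)}(\overline R,1)$ is all of $\Spec k[c_{ij}]$. By Theorem \ref{Milesmaintheorem}, $\HH^{(1)}(\overline R,1)=\Hom_{\overline R}(\overline I/\overline I^2,\overline R)_{-1}$, which is in general a proper linear subspace of the naive parameter space: the step-(2) equations require $\overline z\,Cl$ to lie in the ideal generated by the $q_i$, and this does impose linear identifications among the $c_{ij}$ --- the paper's own degree-$6$ computation in the proof of Theorem \ref{maintheorem4} exhibits exactly such identifications. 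This does not threaten the statement about generation in degree at most two, but as written the claim is false and should be dropped or replaced by the elimination-of-variables reading the paper uses.
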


\begin{proof}
As $q$ is generated by quadratic polynomials, $\Hom_{\overline R}(q/q^2,\overline{R})_{-3}=0$. Therefore, we just have to prove that
$\varphi_3\colon \HH(\overline{R},1)^{(3)}\rightarrow \HH(\overline{R},1)^{(2)}$ is surjective, which equivalent to prove that $\Ext^1_{\overline{R}}(q/q^2,\overline{R})_{-3}=0$.

The ideal $q$ is CM, hence 
$\Ext^i_{\overline{R}}(q/q^2,\overline{R})\cong\Ext^i_{\overline{R}}(q,\overline{R})
\cong\Ext^i_{\overline{S}}(q,\overline{R})\cong\Ext^i_{\overline{S}}(q,\overline{S})\otimes\overline{R}\neq 0$ 
if and only if $i=0$ or $i=r$. 
Therefore, the result is true for $r\geq 3$. For $r=2$, as $q$ is generated by quadratic polynomials and the syzygy matrix is linear, $q=(z_1^2,z_1z_2,z_2^2)$. This is a triple cover which is a determinantal variety determined by the $2\times 2$ minors of its syzygy matrix, whose entries only satisfy linear relations. Therefore $\II_q$ only contains the quadratic equations defining $d_i$ (see Example (\ref{triplecoverexample})).

The second statement is direct as a linear change of variables on the $z_i$ induces one in the $c_{ij}$.
\end{proof}

\begin{example}\label{triplecoverexample}[Triple Covers]
Let $\overline{z}=(z_1,z_2)$, $q=(z_1^2,z_1z_2,z_2^2)$ and 
$$l=\left(\begin{matrix}
   0 & z_2 \\
   -z_2 & -z_1  \\
   z_1 & 0     
\end{matrix}\right).$$
We want to determine the matrices $C_{2\times 3}$, $N_{3\times 2}$ and $D_{1\times 3}$. Applying a change of variables so that each of the $z_i$ is trace free and using $(qN-\overline{z}Cl)=0$ we get that
$$(q-\overline{z}C)=
(\begin{array}{ccc}
 z_1^2-c_1z_1-c_0z_2 & z_1z_2+c_2z_1+c_1z_2 & z_2^2-c_3z_1-c_2z_2
\end{array}),$$
$$(l+N)=\left(\begin{matrix}
   c_3 & z_2+c_2  \\
   -z_2+2c_2 & -z_1+2c_1  \\
   z_1+c_1 & c_0      
\end{matrix}\right).$$

The left kernel of the matrix $(l+N)$ over $S$ is generated by its $2\times 2$ minors so we get   
$$
D=\left(\begin{array}{ccc}
 2(c_0c_2-c_1^2) & -(c_0c_3-c_1c_2) & 2(c_1c_3-c_2^2)
\end{array}\right).
$$

Computing $D$ using the equation $\overline{z}CN+Dl=0$ would give us the same result, so we find that $\HH(\overline{R},1)=\HH^{(2)}(\overline{R},1)$. This was expected from the work of Miranda, a triple cover is determined by an element of $\Hom(S^3E,\bigwedge^2E)$, which is naturally isomorphic to $\text{TCHom}(S^2E,E)$ (see \cite[Prop. $3.3$]{triple}).
\end{example}

Notice that for $q=(z_iz_j)_{1\leq i\leq j\leq d-1}$ the ideal $\II_q$ gives us the local conditions under which a map $S^2\EE\rightarrow \EE$ induces an associative map $S^2\EE\rightarrow \OO_Y\oplus\EE$, where $\EE$ is a locally free $\OO_Y$-module with rank $d-1$. If we add the linear conditions on the $c_{ij}$ that make the variables $z_i$ trace free, we get the local structure of a general covering map of degree $d$.

The case of covering maps with degree $4$ is worked out in \cite{quadruple} where it is proven that 
$\II_q+(\text{trace free conditions})$
are the equations of an affine cone over $\Gr(2,6)$, the Grassmannian of two dimensional subspaces of a six dimensional space, under its natural Pl\"ucker embedding in $\PP^{14}$. 

Important to mention that Proposition \ref{quadeq}, for quadruple covers, was verified by computing the cubic relations between the $c_{ij}$ (step $(4)$) and noticing that they where in the ideal generated by the quadratic ones. We now have a proof that this is the case for covers of any degree $d$.

\subsection{Global analysis}\label{globalanal}

Recall that for $\EE$, a locally free sheaf of $\OO_Y$-modules of rank $2$, Miranda defines a triple cover homomorphism as a morphism $\phi\colon \Sym^2\EE\rightarrow\EE$ that locally is of the form
\begin{equation}
\begin{array}{rcl}
 \phi(z_1^2) & = & c_1z_1+c_0z_2 \\
 \phi(z_1z_2) & = & -c_2z_1-c_1z_2 \\
 \phi(z_2^2) & = & c_3z_1+c_2z_2.
\end{array}
\end{equation}
In this section we generalise the definition of cover homomorphism. 

\begin{definition}\label{coverhomomorphism}
Given a scheme $Y$ and $\FF$ a locally free $\OO_Y$-module of rank $r$, let $\Q$ be a direct summand of $\RR(\FF)$ such that $\RR(\FF)/\langle\Q\rangle$ is a finitely generated $\OO_Y$-module, $\IN(\RR(\FF)/\langle\Q\rangle)=Q$, and $\Q \cap \OO_Y\oplus \FF=\{0\}$.
Then a morphism $\Phi\in \Hom\left(\Q,\RR(\FF)/(\RR(\FF)\Q)\right)$ is called a \textbf{cover homomorphism} if for every local basis $\{z_i\}$ of $\FF$ and $\{q_j\}$ of $\Q$, the ring
$$\OO_Y[z_i]/\langle q_j-\Phi(q_j)\rangle,$$
is a flat deformation of $\OO_Y[z_i]/\langle q\rangle$. 
If $\FF=\EE$ and $\Q=\Sym^2\EE$ then the basis $\{z_i\}$ is chosen to be a trace free basis. We denote cover homomorphisms by $\CHom\left(\FF;\Q,\RR(\FF)/\langle\Q\rangle\right)$.
\end{definition}

When saying that $\OO_Y[z_i]/\langle q_j-\Phi(q_j)\rangle$ is a flat deformation of $\OO_Y[z_i]/\langle q\rangle$ we consider the polynomials to be locally homogeneous by introducing a variable $z_0$. Geometrically this is just the embedding of $\Spec \left(\Sym^\bullet \FF\right)$ in $\Proj\left( \Sym^\bullet\left(\OO_Y\oplus\FF\right)\right)$.

\begin{theorem}\label{thmglobal}
Let $X$ and $Y$ be schemes, $Y$ locally Noetherian, and $\varphi\colon X\rightarrow Y$ a covering map. Then a commutative and associative $\OO_Y$-algebra structure on $\varphi_*\OO_X$ is equivalent to a pair $(\FF,\Phi)$, where $\FF$ is a sheaf of $\OO_Y$-modules such that $\varphi_*\OO_X$ is a direct summand of $\RR(\FF)$ and $\Phi$ is a covering homomorphism in $\CHom(\FF;\Q,\RR(\FF)/\langle\Q\rangle)$. 

Furthermore, if $\Q$ is locally generated by quadratic polynomials and the ideal defined by them has a linear first syzygy matrix, then $i$ is defined by a morphism in $\Hom(\Q,\FF)$.
\end{theorem}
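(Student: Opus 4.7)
The proof strategy is to globalize the local analysis of Sections~\ref{localanalysis}--\ref{iqrelations}. I would establish the equivalence between commutative associative $\OO_Y$-algebra structures on $\varphi_*\OO_X$ and pairs $(\FF,\Phi)$ by constructing the two directions, and then derive the embedding statement and the ``furthermore'' clause as consequences.

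For the forward direction, suppose a pair $(\FF,\Phi)$ is given. Over an affine open $\UU\subset Y$, after fixing a basis $\{z_j\}$ for $\FF^\vee|_\UU$ and a basis $\{q_i\}$ for $\Q^\vee|_\UU$, the cover homomorphism produces explicit generators of an ideal $\II_\UU\subset \RR(\FF^\vee)|_\UU$ which, by the defining property of $\CHom$, is an extension of $\langle \Q^\vee\rangle$. The quotient $\RR(\FF^\vee)|_\UU/\II_\UU$ inherits commutativity from the ambient ring and is associative because $\II_\UU$ is an honest ideal; as an $\OO_Y|_\UU$-module it agrees with $\RR(\FF^\vee)|_\UU/\langle \Q^\vee\rangle = \varphi_*\OO_X|_\UU$. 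Gluing these local algebras into a global sheaf of $\OO_Y$-algebras relies on Lemma~\ref{CMvsAss}(\ref{1,5}) for the coordinate-independence of the initial ideal, combined with the intrinsic nature of $\Phi$ as a morphism of $\OO_Y$-modules, which transports the correction terms correctly under change of trivialization.

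For the backward direction, I would choose $\FF$ such that $\varphi_*\OO_X$ is a direct summand of $\RR(\FF)$; the trace-zero sheaf $\EE$ from Lemma~\ref{tfsplitting} is a canonical choice, though not the only one. The algebra multiplication gives a surjective ring homomorphism $\pi\colon \RR(\FF)\twoheadrightarrow \varphi_*\OO_X$; set $\Q:=\ker\pi$. Using the module splitting, $\Q$ is simultaneously an ideal of $\RR(\FF)$ and a direct $\OO_Y$-module complement of $\varphi_*\OO_X$, so $\varphi_*\OO_X\cong \RR(\FF)/\Q$. I would then define $\Phi\colon \Q\to \varphi_*\OO_X$ as the composition of the inclusion $\Q\hookrightarrow \RR(\FF)$ with the projection onto $\varphi_*\OO_X$ determined by the splitting (``extract the lower-degree correction''). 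This definition is coordinate-free, and the local computation of Section~\ref{iqrelations} verifies that $\Phi$ satisfies the defining condition of $\CHom$.

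The embedding statement $\varphi=p\circ i$ follows by taking relative $\Spec$ of $\pi$, which presents $X$ as a closed subscheme of $\AA(\FF^\vee)$. For the ``furthermore'' clause, Proposition~\ref{quadeq} asserts that when $\Q$ is locally generated by quadrics with linear first syzygy matrix, the constant correction terms $d_i$ are quadratic polynomials in the linear coefficients $c_{ij}$, so that $\Phi$ is determined globally by its ``linear part''; arranging the linear coefficients invariantly produces precisely an $\OO_Y$-morphism $\Q\to \FF$. The main obstacle I anticipate is the gluing step: Lemma~\ref{CMvsAss}(\ref{1,5}) controls the initial ideal under change of basis, but one must also check that a change of basis for $\FF$ that allows mixing with $\OO_Y$ acts compatibly on the coefficients $c_{ij}$ encoded by $\Phi$. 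The coordinate-free reformulation of $\Phi$ in the backward direction is designed precisely to finesse this difficulty, and establishing its equivalence with the local formula of Definition~\ref{coverhomomorphism} will be the most delicate point of the write-up.
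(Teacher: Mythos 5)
Your forward direction, the embedding via relative $\Spec$, and the appeal to Proposition~\ref{quadeq} for the final clause all track the paper's (very terse) argument. The problem is in your backward direction, and it is a genuine misstep rather than a stylistic difference. You set $\Q:=\ker\pi$, but that $\Q$ is not an admissible datum for Definition~\ref{coverhomomorphism}: the definition requires $\Q$ to be the homogeneous ``leading-term'' summand (it must satisfy the initial-ideal condition and admit a local basis $\{q_i(z_j)\}$ of \emph{homogeneous} polynomials of known degree, since the defining condition subtracts correction terms of degree $<\deg(q_i)$), whereas $\ker\pi$ is generated by inhomogeneous elements $q_i-(\text{lower order terms})$. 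Worse, your $\Phi$ is defined as the inclusion $\Q\hookrightarrow\RR(\FF)$ followed by ``the projection onto $\varphi_*\OO_X$ determined by the splitting''; but the splitting furnished by the section of $\pi$ is exactly $\RR(\FF)\cong\sigma(\varphi_*\OO_X)\oplus\ker\pi$, along which your $\Phi$ is the zero map. To extract the correction terms you must project along the graded decomposition of $\RR(\FF)$, not along $\ker\pi$, and at that point you have simply reconstructed the paper's choice in disguise: take $\FF=\EE$ the trace-zero sheaf of Lemma~\ref{tfsplitting}, take $\Q=\Sym^2\EE$ (so $\RR(\FF)/\langle\Q\rangle=\OO_Y\oplus\EE=\varphi_*\OO_X$), and let $\Phi$ be the multiplication map $\Sym^2\EE\rightarrow\OO_Y\oplus\EE$ itself, i.e.\ $\Phi\in\Hom(\Sym^2\EE,\OO_Y\oplus\EE)$.

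Separately, your claim that ``the local computation of Section~\ref{iqrelations} verifies that $\Phi$ satisfies the defining condition of $\CHom$'' points at the wrong tool: Section~\ref{iqrelations} computes the ideal of $q$-relations $\II_q$ for a \emph{given} homogeneous $q$; it does not show that the multiplication ideal of a given flat $\OO_Y$-algebra is an extension of its initial ideal. What makes $\Phi$ a cover homomorphism in the backward direction is Lemma~\ref{CMvsAss}: parts (1) and (3) give that $\II_X$ is Cohen--Macaulay and that its resolution has the same format as that of $\IN(\II_X)$, which is precisely the ``extension'' condition in Definition~\ref{coverhomomorphism}, while part (2) of that lemma (basis independence of the initial ideal) is what you correctly invoke for the gluing in the forward direction. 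With $\Q$ and $\Phi$ chosen as above and Lemma~\ref{CMvsAss} cited in place of Section~\ref{iqrelations}, your outline closes up and coincides with the paper's proof.
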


\begin{proof}
It is direct that a pair $(\FF,\Phi)$ determines a commutative and associative multiplication on $\varphi_*\OO_X$. In the other direction just consider the decomposition $\varphi_*\OO_X=\OO_Y\oplus\EE$. Then a commutative and associative multiplication on $\varphi_*\OO_X$ gives a pair $(\EE,\Phi)$ where $\Phi\in\Hom(\EE;\Sym^2\EE,\OO_Y\oplus\EE)$.

The last sentence is a \mbox{consequence} of Proposition \ref{quadeq}.
\end{proof}

Theorem \ref{thmglobal} is not a global theorem as the theorems presented in \cite{triple,quadruple} for triple and quadruple covers are. Nonetheless, this theorem will allow to explicitly construct the section ring of algebraic covers. An important case we analyze in the next section is the case of Gorenstein covers for which we have a concrete decomposition of $\varphi_*\OO_X$.


In the next Lemma, we prove that the local structure of a cover homomorphism determines the global one.  

\begin{lemma} Let $Y$ be a locally Noetherian irreducible scheme, and $\FF$ a locally free $\OO_Y$-module of rank $r$. Then $\Phi\in\Hom(\Q,\RR(\FF)/\langle\Q\rangle)$ is a cover homomorphism, for $\Q$ a direct summand of $\RR(\FF)$, if and only if there exists a point $y\in Y$ such that the induced homomorphism on the stalk  
$$\Phi_y\colon \Q_y\rightarrow \left(\RR(\FF)/\langle\Q\rangle\right)_y $$ is a cover homomorphism.
\end{lemma}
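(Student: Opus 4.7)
My plan is to reduce the cover homomorphism property to the simultaneous vanishing of an explicit finite collection of polynomial sections of $\OO_Y$, and then exploit irreducibility of $Y$ to promote the hypothesized stalk-level vanishing at $y$ to vanishing everywhere.

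The forward direction is immediate: if $\Phi$ is a cover homomorphism on $Y$, the condition of Definition \ref{coverhomomorphism} applied to any open $\UU\ni y$ passes to the stalk $\Phi_y$. For the converse, I would fix $y\in Y$ at which $\Phi_y$ is a cover homomorphism and choose an affine open $U\ni y$ trivializing both $\FF$ and $\Q$, with local bases $\{z_i\}$ of $\FF^\vee$ and $\{q_j\}$ of $\Q^\vee$. The restriction $\Phi|_U$ is encoded by coefficients $c_{ij}=(\Phi^*\overline z^j)(q_i)\in\OO_Y(U)$, and by the construction of the ideal of $q$-relations in Subsection \ref{iqrelations} the cover homomorphism condition on any $V\subseteq U$ is equivalent to the simultaneous vanishing of the sections $P_k(c_{ij})\in\OO_Y(V)$, for a fixed finite generating set $\{P_k\}$ of $\II_q$. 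The stalk hypothesis says $P_k(c_{ij})_y=0$ in $\OO_{Y,y}$ for every $k$, which by definition of the stalk forces each $P_k(c_{ij})$ to vanish on some open neighborhood $V\subseteq U$ of $y$.

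To finish globally I will use irreducibility: the unique generic point $\eta$ lies in $V$, and under the integrality implicit in the hypothesis (locally Noetherian and irreducible, working with a covering map) the restriction $\OO_Y(W)\hookrightarrow\OO_{Y,\eta}=K(Y)$ is injective for every non-empty open $W\subseteq Y$. Consequently $P_k(c_{ij})_\eta=0$ forces $P_k(c_{ij})=0$ on every open on which these coefficients are defined, so the cover homomorphism condition holds on all of $U$. Covering $Y$ by such trivializing opens and using the basis-independence of $\II_q$ from Proposition \ref{quadeq} to compare the polynomial relations on overlaps yields the cover homomorphism condition on every open $\UU\subseteq Y$.

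The main subtlety I expect is the chart-compatibility at the last step: the coefficients $c_{ij}$ depend on the local basis of $\FF$, so one has to verify that the vanishing of the relations $P_k(c_{ij})$ is preserved under transition functions. This is precisely what the basis-independence of $\II_q$ in Proposition \ref{quadeq} delivers in the quadratic case, and what the intrinsic construction of $\II_q$ in Subsection \ref{iqrelations} provides in general; without it the argument would be tied to a particular trivialization and would fail to globalize.
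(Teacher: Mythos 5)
Your proposal follows essentially the same route as the paper's proof: encode the cover-homomorphism condition as the vanishing of the finitely many relations of $\II_q$ in the coefficients $c_{ij}$, propagate that vanishing from the stalk at $y$ to all of $Y$ using irreducibility, and handle changes of trivialization via the identification $\II_q\cong\II_{q'}$. The only difference is cosmetic: you route the propagation through the generic point and the injectivity of restriction into $\OO_{Y,\eta}$ (tacitly using reducedness), whereas the paper phrases the same step directly through irreducibility and the $\OO_Y$-linearity of the transition maps, at the same level of rigor.
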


\begin{proof}
Let $(z_1,\dots,z_r)$ be a basis for $\FF\otimes \OO_{Y,y}$ and $(q_1,\dots,q_m)$ a set of generators for $\Q\otimes \OO_{Y,y}$. As $\Phi$ is a cover homomorphism, the $c_{ij}\in\OO_{Y,y}$ associated with the morphism $\Phi$ satisfy the relations in $\II_q$.

Let $\UU\subset Y$ an open set for which all $c_{ij}\in\OO_Y(\UU)$, then for any $y'\in\UU$, the image of the $c_{ij}$ in $\OO_{Y,y'}$ also satisfy the relations in $\II_q$. 

Given any open set $\UU'\subset Y$, as $Y$ is irreducible and the transition morphisms in $\Q$ and $\RR(\FF)$ are given by $\OO_Y$-linear automorphisms, the $c_{ij}'$, where $\{z'_i\},\{q'_i\}$ are basis for $\FF$ and $\Q$ over $\UU'$, satisfy the relations in $\II_{q}\cong\II_{q'}$.  
\end{proof}

\section{Gorenstein Covers}\label{GorensteinCovers}
A covering map $\varphi\colon X\rightarrow Y$ is called a Gorenstein covering map if all the fibres $X_y$, for $y\in Y$, are Gorenstein, see \cite[Prop.$9.6$]{HartshorneRD}.
Casnati and Ekedahl studied these covering maps and proved the following result.

\begin{theorem}\cite[Theorem $2.1$]{casnatiI}\label{casnatithm} Let $X$ and $Y$ be schemes, $Y$ integral and let $\varphi\colon X\rightarrow Y$ be a Gorenstein cover of degree $d\ge 3$. There exists a unique $\PP^{d-2}$-bundle $\pi\colon \PP\rightarrow Y$ and an embedding $i\colon X\hookrightarrow \PP$ such that $\varphi=\pi\circ i$ and $X_y:=\varphi^{-1}(y)\subseteq{\PP}_y:=\pi^{-1}(y)\cong\PP^{d-2}$ is a non--degenerate arithmetically Gorenstein subscheme for each $y\in Y$.
Moreover the following hold.
\begin{enumerate}
 \item $\PP\cong\PP(\EE)$ where $\EE^\vee\cong \operatorname{coker}\varphi^{\#}$, $\varphi^{\#}\colon \OO_Y\rightarrow\varphi_*\OO_X$.
 \item The composition $\varphi\colon \varphi^*\EE\rightarrow\varphi^*\varphi_*\omega_{X|Y}\rightarrow\omega_{X|Y}$ is surjective and the ramification divisor $R$ satisfies $\OO_X(R)\cong\omega_{X|Y}\cong\OO_X(1):=i^*\OO_{\PP(\EE)}(1)$.
 \item There exists an exact sequence $\mathcal{N}_*$ of locally free $\OO_{\PP}$-sheaves
 \begin{equation*}\label{symrescas}
  0\leftarrow \OO_X\leftarrow\OO_{\PP}\xleftarrow{\alpha_1}\mathcal{N}_1(-2)\xleftarrow{\alpha_2}\cdots \xleftarrow{\alpha_{d-3}}\mathcal{N}_{d-3}(-d+2)\xleftarrow{\alpha_{d-2}}\mathcal{N}_{d-2}(-d)\leftarrow 0
 \end{equation*}
 unique up to isomorphisms and whose restriction to the fibre $\PP_y:=\pi^{-1}(y)$ over $y$ is a minimal free resolution of the structure sheaf of $X_y:=\varphi^{-1}(y)$, in particular $\mathcal{N}_i$ is fibrewise trivial. $\mathcal{N}_{d-2}$ is invertible and, for $i=1,\dots,d-3$, one has 
 $$\operatorname{rk}\mathcal{N}_i=\beta_i=\frac{i(d-2-i)}{d-1}\binom{d}{i+1},$$
 hence $X_y\subset \PP_y$ is an arithmetically Gorenstein subscheme. Moreover $\pi^*\pi_*\mathcal{N}_*\cong \mathcal{N}_*$ and $\HHom\left(\mathcal{N}_*,\mathcal{N}_{d-2}(-d)\right)\cong\mathcal{N}_*$.
 \item If $\PP\cong\PP(\EE')$, then $\EE'\cong\EE$ if and only if $\mathcal{N}_{d-2}\cong \pi^*\det\EE'$ in the resolution (\ref{symrescas}) computed with respect to the polarization $\OO_{\PP(\EE')}(1)$.
\end{enumerate}
\end{theorem}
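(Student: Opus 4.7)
The plan is to deduce each of the four items from a fibrewise statement about Artinian Gorenstein algebras and then globalise using flatness of $\varphi$ together with cohomology and base change.

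First I would build the projective bundle and the embedding. Applying Lemma \ref{tfsplitting} to $\varphi^\#\colon \OO_Y\to\varphi_*\OO_X$ produces $\EE^\vee\cong \operatorname{coker}\varphi^\#$, locally free of rank $d-1$, and one sets $\PP:=\PP(\EE)$. Fibrewise, $X_y$ is an Artinian Gorenstein algebra of length $d$, and classical Macaulay inverse-system theory gives a canonical embedding of $X_y$ into $\PP^{d-2}$ as a non-degenerate arithmetically Gorenstein subscheme, the linear system being cut out by a socle generator of $\omega_{X_y}$. To promote this to a global embedding I would exploit that $\omega_{X|Y}$ is a line bundle (the Gorenstein hypothesis in family form) and that the counit $\varphi^*\varphi_*\omega_{X|Y}\to\omega_{X|Y}$ is surjective; twisting by $\omega_{X|Y}$ absorbs the ambiguity in the choice of socle generator. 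This simultaneously yields (1), the embedding $i$, and the identification $\omega_{X|Y}\cong i^*\OO_{\PP(\EE)}(1)$ of (2); the statement $\OO_X(R)\cong \omega_{X|Y}$ is then the usual relation between the ramification divisor and the relative dualizing sheaf of a finite flat morphism.

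For the resolution $\mathcal{N}_\bullet$ of (3), the fibrewise input is that the coordinate ring of $X_y\subset \PP_y$ admits a minimal graded free resolution with Gorenstein symmetry $\HHom(N_{\bullet,y},N_{d-2,y}(-d))\cong N_{\bullet,y}$, and the Betti numbers are fixed by the Hilbert function of a length-$d$ Gorenstein Artinian quotient of $k[x_0,\dots,x_{d-2}]$, which yields the stated formula for $\operatorname{rk}\mathcal{N}_i$. Since $\varphi$ is flat these Hilbert functions are locally constant in $y$, so I would build the global resolution inductively: the sheaf-Ext vanishing $\mathcal{E}xt^j_{\OO_{\PP_y}}(\OO_{X_y},\OO_{\PP_y}(-d))=0$ for $j\ne d-2$ together with cohomology and base change gives local freeness of each $\mathcal{N}_i$ and fibrewise triviality, hence $\pi^*\pi_*\mathcal{N}_\bullet\cong \mathcal{N}_\bullet$. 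The self-duality $\HHom(\mathcal{N}_\bullet,\mathcal{N}_{d-2}(-d))\cong \mathcal{N}_\bullet$ will then follow from relative Grothendieck duality applied to $i$, which promotes the fibrewise Gorenstein symmetry to a global isomorphism of complexes. Item (4) is bookkeeping: any isomorphism $\PP(\EE)\cong\PP(\EE')$ over $Y$ comes from an identification $\EE'\cong\EE\otimes\LL$ for some line bundle $\LL$, and under such a twist the top sheaf in the resolution transforms by $\det(\EE')$; the stated criterion reads off from this.

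The hard part will be the globalisation in the first paragraph: the canonical fibrewise embedding via a socle generator must be shown to vary coherently in $y$, so that the local Macaulay presentations glue to a genuine morphism $i\colon X\hookrightarrow \PP(\EE)$. This is exactly where the Gorenstein hypothesis is used in an essential way, since it is what makes $\omega_{X|Y}$ an invertible sheaf, allowing the socle choice to be trivialised globally up to the line-bundle twist already encoded in $\PP(\EE)$. Once that step is in place, the remaining items reduce to standard relative duality and flat base change.
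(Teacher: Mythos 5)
There is nothing in the paper to compare your argument against: Theorem \ref{casnatithm} is quoted verbatim from Casnati--Ekedahl and its ``proof'' in this paper is the citation to \cite[Theorem 2.1]{casnatiI}. Measured against the original argument, your outline does follow the same broad strategy (trace splitting to obtain $\EE$, relative duality giving $\varphi_*\omega_{X|Y}\cong(\varphi_*\OO_X)^\vee\cong\OO_Y\oplus\EE$, the evaluation map $\varphi^*\EE\to\omega_{X|Y}$ defining $i\colon X\to\PP(\EE)$, and a relative resolution built from the fibrewise one by flatness and duality), so the plan is sound in outline.

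As a proof sketch, however, it has concrete gaps. First, the crux of the theorem is precisely the step you defer to ``classical Macaulay inverse-system theory'': inverse systems as you invoke them concern graded or local Artinian algebras, whereas $X_y$ is an arbitrary finite Gorenstein $k(y)$-scheme (possibly several local pieces), and what must be proved is that the evaluation $\varphi^*\EE\to\omega_{X|Y}$ is surjective and induces a closed embedding of \emph{every} fibre as a non-degenerate arithmetically Gorenstein subscheme of $\PP^{d-2}$ --- this is where $d\ge 3$ and the Gorenstein socle pairing actually enter, and it is the key lemma of \cite{casnatiI}, not a black box. Second, you never address uniqueness of the pair $(\PP,i)$, and item (4) is not mere bookkeeping: it is exactly the statement that pins down which twist $\EE'\cong\EE\otimes\LL$ occurs, via $\mathcal{N}_{d-2}\cong\pi^*\det\EE'$, so dismissing it presupposes what is to be shown. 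Third, ``the Betti numbers are fixed by the Hilbert function'' is false as a general principle (graded Betti numbers are only semicontinuous for fixed Hilbert function); here their constancy and the formula $\beta_i=\frac{i(d-2-i)}{d-1}\binom{d}{i+1}$ follow from the special shape of the $h$-vector $(1,d-2,1)$ of a non-degenerate aG degree-$d$ subscheme of $\PP^{d-2}$ (equivalently, from the nondegenerate quadric arising as dual socle generator of the Artinian reduction), and this needs to be argued. Finally, cohomology and base change alone does not hand you the sheaves $\mathcal{N}_i$; the global resolution is built by an induction on kernels in which local freeness and fibrewise minimality are proved step by step using the constancy of the fibrewise Betti numbers, and the self-duality $\HHom(\mathcal{N}_*,\mathcal{N}_{d-2}(-d))\cong\mathcal{N}_*$ then comes from relative duality applied to this specific complex.
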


Definition \ref{coverhomomorphism} of cover homomorphism uses the global structure of $\varphi_*\OO_X$ and not its fibre structure. An example where we can see that it is an important detail is the case of covering maps of degree $3$ without total ramification points. Then any fibre can be described by the vanishing of a single polynomial of degree $3$, so the fibre is Gorenstein, although the global multiplication is defined by three polynomials that are a deformation of a Cohen--Macaulay but not Gorenstein ideal.

In the next Theorem we are interested in Gorenstein covering maps for which the fibre structure is induced by the global structure of $\varphi_*\OO_X$.

\begin{theorem}\label{socle}
 Let $\varphi\colon X\rightarrow Y$ be a covering map of degree $d\geq 4$, and $\LL$ a line bundle on $Y$ such that $\omega_Y\cong \LL^{\otimes k_Y}$ and $\omega_X\cong\varphi^*\LL^{\otimes k_X}$, $k_Y, k_X\in\ZZ$, $k_X\neq k_Y$. Then 
 $$\varphi_*\OO_X=\OO_Y\oplus\FF\oplus \mathcal{L}^{\otimes k_Y-k_X}.$$
 Furthermore, $\varphi_*\OO_X$ is a direct summand of $\RR(\FF)$.
\end{theorem}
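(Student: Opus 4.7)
The plan is to combine Grothendieck duality for the finite flat morphism $\varphi$ with the trace splitting of Lemma \ref{tfsplitting} so as to produce a twisted self-duality for $\varphi_*\OO_X$, and then to extract the three summands from that self-duality. Concretely, the adjunction $\omega_X\cong\varphi^*\omega_Y\otimes\omega_{X/Y}$ combined with the hypotheses $\omega_Y\cong\LL^{\otimes k_Y}$ and $\omega_X\cong\varphi^*\LL^{\otimes k_X}$ gives $\omega_{X/Y}\cong\varphi^*\LL^{\otimes(k_X-k_Y)}$. Grothendieck--Serre duality for $\varphi$ yields the standard isomorphism $\varphi_*\omega_{X/Y}\cong\HHom_{\OO_Y}(\varphi_*\OO_X,\OO_Y)=(\varphi_*\OO_X)^\vee$, while the projection formula gives $\varphi_*\omega_{X/Y}\cong\varphi_*\OO_X\otimes\LL^{\otimes(k_X-k_Y)}$; equating the two produces the self-duality
\[\varphi_*\OO_X\;\cong\;(\varphi_*\OO_X)^\vee\otimes\LL^{\otimes(k_Y-k_X)}.\]

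Next I would dualize the trace decomposition $\varphi_*\OO_X=\OO_Y\oplus\EE$ and plug it into the right-hand side: this gives $\LL^{\otimes(k_Y-k_X)}\oplus(\EE^\vee\otimes\LL^{\otimes(k_Y-k_X)})$, exhibiting $\LL^{\otimes(k_Y-k_X)}$ as a direct summand of $\varphi_*\OO_X$. Together with the canonical unit summand $\OO_Y$, defining $\FF$ as the complement of $\OO_Y\oplus\LL^{\otimes(k_Y-k_X)}$ inside $\varphi_*\OO_X$ gives the decomposition $\varphi_*\OO_X=\OO_Y\oplus\FF\oplus\LL^{\otimes(k_Y-k_X)}$. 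For the second statement, the Gorenstein structure on each fibre makes $\LL^{\otimes(k_Y-k_X)}$ play the role of the socle, so that it is reached by the multiplication map $\Sym^2\FF\to\varphi_*\OO_X$; hence $\varphi_*\OO_X$ is a direct summand of $\RR(\FF)=\bigoplus_n\Sym^n\FF$, and the embedding $X\hookrightarrow\AA(\FF^\vee)$ is then exactly the content of Theorem \ref{thmglobal}.

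The hard part will be justifying that $\OO_Y$ and $\LL^{\otimes(k_Y-k_X)}$ split off as \emph{disjoint} direct summands rather than coinciding as the same subsheaf. In degenerate situations (e.g.\ étale covers) the duality-induced socle inclusion $\LL^{\otimes(k_Y-k_X)}\hookrightarrow\varphi_*\OO_X$ is nothing but a rescaling of the unit, and the hypothesis $k_X\neq k_Y$ is precisely what rules this out by forcing $\LL^{\otimes(k_Y-k_X)}\not\cong\OO_Y$ whenever $\LL$ is not torsion of the appropriate order. Equivalently, one has to check that the composition of the socle inclusion with the trace map $\varphi_*\OO_X\to\OO_Y$ vanishes, which boils down to choosing the duality generator $\tilde\tau\in(\varphi_*\OO_X)^\vee\otimes\LL^{\otimes(k_Y-k_X)}$ with $\tilde\tau(1)=0$---a local-to-global question exploiting the ambiguity of such a generator up to units of $\varphi_*\OO_X$.
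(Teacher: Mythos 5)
Your first half follows the paper's own route: the twisted self-duality $\varphi_*\OO_X\cong\HHom_{\OO_Y}(\varphi_*\OO_X,\OO_Y)\otimes\LL^{\otimes(k_Y-k_X)}$ obtained from duality/adjunction plus the projection formula, combined with the trace splitting of Lemma \ref{tfsplitting}, is exactly how the paper gets the decomposition $\varphi_*\OO_X=\OO_Y\oplus\FF\oplus\MM$ with $\MM=\LL^{\otimes(k_Y-k_X)}$. The genuine gap is the second assertion, that $\varphi_*\OO_X$ is a direct summand of $\RR(\FF)$. You dispose of it in one sentence: ``the Gorenstein structure on each fibre makes $\MM$ play the role of the socle, so that it is reached by $\Sym^2\FF\to\varphi_*\OO_X$.'' First, Gorenstein fibres are not a hypothesis of the theorem; in the paper this is a consequence recorded \emph{after} the proof (it does follow quickly from the self-duality, but you would have to say so). Second, and more seriously, even granting Frobenius fibres, what you need is that $\varphi_*\OO_X=\OO_Y+\FF+\Sym^2\FF$, i.e.\ that multiplication $\Sym^2\FF\to\varphi_*\OO_X$ hits the $\MM$-direction at \emph{every} point of $Y$, and this is not a formal consequence of Gorenstein-ness for the $\FF$ you chose, which is an arbitrary complement of $\OO_Y\oplus\MM$ coming from an unspecified splitting. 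The paper's proof spends most of its length exactly here: it sets $\eta=\Psi(1_X)$, shows $\eta(1_X)=0$, introduces the nonsingular pairing $\langle a,b\rangle=\eta(ab)$, takes $\FF$ to be the orthogonal complement of $1_X$ and of the element $e$ with $e\cdot\eta=\tr$, checks the pairing stays nonsingular on $\FF$, and proves surjectivity of $\eta|_{\Sym^2\FF}$ by reducing modulo a maximal ideal and using nondegeneracy on the closed fibre (Behnke's argument $A=R\cdot e_0+\FF+\FF^2$). None of this appears in your proposal, and without it the direct-summand claim, and hence the embedding $X\hookrightarrow\AA(\FF^\vee)$ via Theorem \ref{thmglobal}, is unproven.

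The ``hard part'' you flag at the end, the disjointness of the summands $\OO_Y$ and $\MM$, is indeed the condition $\eta(1_X)=0$ (Behnke's hypothesis $\eta(e_0)=0$), but you leave it as an open question rather than closing it. In the paper it is built into the construction: the twisted sequence $0\to\MM^{-1}\to\HHom_{\OO_Y}(\varphi_*\OO_X,\OO_Y)\to\EE^\vee\otimes\MM^{-1}\to 0$ splits because $(\tr\circ\Psi^{-1})\circ(\Psi\circ\varphi^{\#})=\operatorname{id}$, so $\eta$ lies in the $\MM^\vee$-summand of the dual and therefore kills $1_X$; this is where $k_X\neq k_Y$ (i.e.\ $\MM\not\cong\OO_Y$) is used. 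So the first statement of the theorem can be completed along your lines, but the second statement requires the full Frobenius-form argument, including the specific orthogonal choice of $\FF$, which your proposal does not supply.
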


Consider $R$, a commutative ring with identity, and $A$ a commutative $R$-algebra, with unit element $e_0$, finitely generated, and projective as an $R$-module. Then the following conditions are equivalent.
\begin{itemize}
    \item As $A$-modules, $A\cong\Hom_R(A,R)$.
    \item There exists a nonsingular $R$-bilinear form $\langle\, ,\,\rangle\colon A\times A\rightarrow R$ such that $\langle ab,c\rangle=\langle a,bc\rangle$, for all $a,b,c\in A$.
    \item There exists a surjective $R$-linear form $\eta\colon A\rightarrow R$, such that $\ker \eta$ contains no ideal other than $(0)$. 
\end{itemize}

If these conditions hold for an algebra $A$, then it is called a Frobenius $R$-algebra.  
In \cite{Behnke} it is proven that, if the form $\eta$ satisfies $\eta(e_0)=0$, then there exists $e^*\in A$ such that 
$$A=R\cdot e_o\oplus\FF\oplus R\cdot e^*$$
and $A=R\cdot e_0+\FF+\FF^2$. The proof of Theorem \ref{socle} is based on their argument.

\begin{proof}
By adjunction formula
$$\varphi_*\omega_X=\HHom_{\OO_Y}(\varphi_*\OO_X,\omega_Y)\cong\HHom_{\OO_Y}(\varphi_*\OO_X,\OO_Y)\otimes \mathcal{L}^{\otimes k_Y}.$$
On the other hand, by assumption,
$$\varphi_*\omega_X\cong\varphi_*\varphi^*\LL^{\otimes k_X}=\varphi_*\OO_X\otimes \LL^{\otimes k_X}.$$
We conclude that, as $\OO_X$-modules, $\varphi_*\OO_X$ and $\HHom_{\OO_Y}(\varphi_*\OO_X,\OO_Y)$ are isomorphic. Denote this isomorphism by  $\Psi\colon \varphi_*\OO_X\rightarrow\HHom_{\OO_Y}(\varphi_*\OO_X,\OO_Y)\otimes \MM$, where $\MM=
\LL^{k_Y-k_X}$.

Recall that from the trace map splitting we have the following short exact sequence
$$0\rightarrow \OO_Y\xrightarrow{\varphi^\#}\varphi_*\OO_X \rightarrow \EE\rightarrow 0.$$
Applying $\Psi$ to the middle term and tensoring with $\MM^{-1}$ we have 
$$
\begin{array}{cl}
     &  0\rightarrow \MM^{-1}\xrightarrow{\Psi\circ\varphi^\#}\HHom_{\OO_Y}(\varphi_*\OO_X,\OO_Y) \rightarrow \EE\otimes\MM^{-1}\rightarrow 0 \\
\Leftrightarrow & 0\rightarrow \MM^{-1}\xrightarrow{\Psi\circ\varphi^\#}\OO_Y\oplus\EE^\vee \rightarrow \EE\otimes\MM^{-1}\rightarrow 0.
\end{array}
$$
This sequence splits as the composition of $\Psi\circ\varphi^\#$ with $\tr \circ \Psi^{-1}$ is the identity. In particular, as $\MM\neq \OO_Y$ (because $k_Y\neq k_Y$), $\MM^{-1}$ is a direct summand of $\HHom_{\OO_Y}(\varphi_*\OO_X,\OO_Y)$ and we have the decomposition 
 $$\varphi_*\OO_X=\OO_Y\oplus\FF\oplus \MM.$$

Notice that $\varphi^\#(1_Y)$ is the generator of $\varphi_*\OO_X$ as an $\OO_X$-module, hence, $\Psi(\varphi^\#(1_Y))$ is a generator of $\HHom(\varphi_*\OO_X,\OO_Y)$. For simplicity we will use the notation $1_X=\varphi^\#(1_Y)$ and $\eta=\Psi(\varphi^\#(1_Y))$.

One can use $\Psi$ to define an $\OO_Y$-bilinear form $\langle\, ,\,\rangle\colon \varphi_*\OO_X\times\varphi_*\OO_X\rightarrow \OO_Y$ by 
$$\langle a,b\rangle = \Psi(a)(b)=a\Psi(\varphi^\#(1_Y))(b)=a\eta(b)=\eta(ab).$$
It is a nonsingular form ($a\eta=0\Rightarrow a=0$), and $\eta\colon \varphi_*\OO_X\rightarrow\OO_Y$ is surjective with no ideal other than the zero ideal in its kernel (if $\eta(\II)=0$, for a sheaf of $\OO_X$-ideals $\II$, and $0\neq a\in \II$, then $\eta(ab)=0$ for all $b\in\varphi_*\OO_X$, which is a contradiction).

Mind that in $\HHom(\varphi_*\OO_X,\OO_Y)=\OO_Y\oplus\FF^\vee\oplus\MM^\vee$, $\eta$ is the summand $\MM^\vee$. In particular, $\OO_Y\oplus\FF$ is the kernel of $\eta$, and so $\eta(1_X)=0$.
As $\HHom_{\OO_Y}(\varphi_*\OO_X,\OO_Y)$ is generated by $\eta$, let $e\in \OO_X$ such that $e\cdot\eta=\tr$, then the following equalities hold
$$1_Y=e\cdot\eta(1_X)=\eta(e\cdot 1_X)=\eta(e).$$

The component $\FF$ of $\varphi_*\OO_X$ is the orthogonal complement of $1_X$ and $e$, with respect to the bilinear form on $\varphi_*\OO_X$, as for $z\in\FF|_\UU$ 
$$\langle 1_X,z\rangle =\eta(z)=0=\tr(z)=\langle e,z\rangle.$$

Furthermore, when restrited to $\FF$, the form $\langle\, ,\,\rangle$ is nonsigular. If there is an element $f\in\FF|_\UU$ such that $\langle f,g\rangle=0$ for all $g\in \FF|_\UU$, then $\langle f,x\rangle=0$ for all $x\in \varphi_*\OO_X|_\UU$, but the bilinear form is nonsingular by construction. 

Moreover, the form $\eta|_{\Sym^2\FF}$ is surjective. Realise that $\operatorname{Im}\left(\eta|_{\Sym^2\FF}\right)$ is closed by multiplication by $\OO_Y$, so assume it is a sheaf of $\OO_Y$-ideals $\II$. Over an open set such that $\OO_Y(\UU)$ is affine, consider a maximal ideal $y\subset \OO_Y(\UU)$ containing $\II|_\UU$. Then we get a contradiction as $\Psi\otimes k(y)$ is an isomorphism of vector spaces and all the statements written in the proof are still valid for the covering map $\varphi_y$. In particular, $\langle\, ,\,\rangle$ is nonsingular when restricted to $\FF_y$, but $\operatorname{Im}(\eta|_{\Sym^2\FF_y})\subset \II_\UU\otimes k(y)=0$.

We conclude that $\varphi_*\OO_X=\OO_Y+\FF+\Sym^2\FF$ and therefore $\varphi_*\OO_X$ is a direct summand of $\RR(\FF)$.
\end{proof}

Notice that Theorem \ref{socle} proves that for a covering map $\varphi\colon X\rightarrow Y$ and $\LL$ a line bundle on $Y$ in the conditions of imposed on the Theorem, $\varphi$ is determined by a cover homomorphism $\Phi\in\CHom(\FF;\Q,\OO_Y\oplus\FF\oplus \LL^{k_Y-k_X})$, where $\Q$ is the kernel of the map $S^2\FF\rightarrow \LL^{\otimes k_X-k_Y}$.

Furthermore, $\omega_{X}\otimes k(y)$ is generated by $e\otimes k(y)$, for every point $y\in Y$. In particular every fibre $X_y$ is Gorenstein (see \cite[Prop. $21.5$]{eisenbudCA}), so the structure Theorem \ref{casnatithm} applies and one can rewrite similar results with the embedding in $\AA^{d-2}(\FF^\vee)$ instead of $\PP^{d-2}(\FF^\vee\oplus\LL^{k_X-k_Y})$.
 
This might not be an advantage in practicall terms as, although $\FF$ is determined, $\Q$ may not be. As example take the surfaces $S_d$ such that $K(S_d)=d, p_g(S_d)=3, q(S_d)=0$, $d\geq 2$. In \cite{casII}, Casnati proves that the canonical map of $S_d$ induces a Gorenstein cover of degree $d$, $\varphi\colon X_d\rightarrow\PP^2$, such that 
$$\varphi_*\OO_{X_d}=\OO_{\PP^2}\oplus\OO_{\PP^2}(-2)^{\oplus(d-2)}\oplus\OO_{\PP^2}(-4).$$
As $\Sym^2\left(\OO_{\PP^2}(-2)^{\oplus(d-2)}\right)$ is generated by $ \binom{d-1}{2}$ copies of $\OO_{\PP^2}(-4)$, there are different possible choices for $\Q$. As the fibres of $\varphi$ are Gorenstein ideals of codimension $d-2$, one can only completely describe the surfaces $S_d$ for $d\leq 5$, i.e. up to codimension $3$.
In the next Section we will construct a model for Gorenstein covers of degree $6$ under extra assumptions on $\FF$.

\subsection{Degree $6$ covering map}\label{Codimension4}

Let us quickly recall the definition of $\oGr(5,10)$, the orthogonal Grassmann variety and the equations defining its embedding in $\PP^{15}$. One can read full descriptions in \cite{ogr,MukaiOGr}. 

Consider $V=\CC^{10}$ a vector space with a nondegenerate quadratic form $q$ so that $V=U\oplus U^\vee$. A subspace of $V$ is called isotropic if $q$ is identically zero on it, e.g. $U$ and $U^\vee$ are isotropic.

The subspace of isotropic $5$-dimensional subspaces of $V$ has two components that can be divided by the parity of the intersection with $U$, i.e. if $F$ is a continuous family of generators of maximal isotropic subspaces of $V$, then $(F\cap U \mod 2)$ is locally constant. We pick the component that contains $U$, and so define the orthogonal Grassman variety $\oGr(5,10)$ as 
$$\operatorname{OGr}(5,10)=\left\{\begin{array}{rcl} F \in \operatorname{Gr}(5,V) & \bigg| & \begin{array}{c} \text{F is isotropic for }p \\ \text{and }\dim F\cap U \text{ is odd}  \end{array} \end{array}\right\}.$$

This Grassmannian embeds into $\PP\left(\CC\oplus\bigwedge^2 U\oplus\bigwedge^4 U\right)$ as a homogeneous space for the special orthogonal group $\operatorname{SO}_\CC(10)$. It is defined by the vanishing of the following polynomials 
$$
\begin{array}{rcl}
    N_1 & = &  \xi_{0}\xi_{2345} - \xi_{23}\xi_{45} + \xi_{24}\xi_{35} - \xi_{25}\xi_{34},  \\
    N_{-1} & = & \xi_{12}\xi_{1345} - \xi_{13}\xi_{1245} + \xi_{14}\xi_{1235} - \xi_{15}\xi_{1234}, \\
    N_2 & = & \xi_{0}\xi_{1345} - \xi_{13}\xi_{45} + \xi_{14}\xi_{35} - \xi_{15}\xi_{34}, \\
    N_{-2} & = &  \xi_{12}\xi_{2345} - \xi_{23}\xi_{1245} + \xi_{24}\xi_{1235} - \xi_{25}\xi_{1234},  \\
    N_3 & = &  \xi_{0}\xi_{1245} - \xi_{12}\xi_{45} + \xi_{14}\xi_{25} - \xi_{15}\xi_{24}, \\
    N_{-3} & = &  \xi_{13}\xi_{2345} - \xi_{23}\xi_{1345} + \xi_{34}\xi_{1235} - \xi_{35}\xi_{1234}, \\
    N_4 & = & \xi_{0}\xi_{1235} - \xi_{12}\xi_{35} + \xi_{13}\xi_{25} - \xi_{15}\xi_{23}, \\
    N_{-4} & = & \xi_{14}\xi_{2345} -\xi_{24}\xi_{1345} + \xi_{34}\xi_{1245} - \xi_{45}\xi_{1234}, \\
    N_5 & = & \xi_{0}\xi_{1234} - \xi_{12}\xi_{34} + \xi_{13}\xi_{24} - \xi_{14}\xi_{23},  \\
    N_{-5} & = & \xi_{15}\xi_{2345} - \xi_{25}\xi_{1345} + \xi_{35}\xi_{1245} - \xi_{45}\xi_{1235}. 
\end{array}
$$
Where the variable $\xi_0$ corresponds to the term $\CC$ and, denoting by $\langle f_1,\dots,f_5\rangle$ a basis of $U$, the $\xi_{ij}$ correspond to $(f_i\wedge f_j)$, a basis of $\bigwedge^2 U$, and $\xi_{ijkl}$ to $(f_i\wedge f_j\wedge f_k\wedge f_l)$, basis of $\bigwedge^4 U$. 

\begin{theorem}\label{maintheorem4}
 Let $\varphi\colon X\rightarrow Y$ be a Gorenstein covering map such that 
 $$\varphi_*\OO_X=\OO_Y\oplus M\oplus M\oplus\wedge^2M,$$
 where $M$ is a simple $\OO_Y$-module with rank $2$. Then 
 \begin{enumerate}
  \item $\varphi$ determines and is determined by a morphism in $\Hom\left((S^2M)^{\oplus 3},M^{\oplus 2}\right)$; 
  \item if $\{z_1,z_2,w_1,w_2\}$ is a local basis for $M\oplus M$ (where $w_i$ is the image of $z_i$ by an isomorphism from $M$ to $M$), we get that $\IN(\II_X)$ is locally generated by the polynomials 
  \begin{equation}\label{q}
q=\left(
\begin{array}{ccccccccc}
 z_1^2 & z_1z_2 & z_2^2 &z_1w_1  & \frac{1}{2}(z_1w_2+z_2w_1) & z_2w_2 & w_1^2 & w_1w_2 & w_2^2
\end{array}\right);
\end{equation}
  \item the ideal $\II_q$ determining the local structure of a fibre of $\varphi$ is given by the polynomials defining the spinor embedding of the orthogonal Grassmannian $\oGr(5,10)$ in $\PP^{15}$;
  \item $\varphi$ is a deformation of an $S_3$-Galois branched cover (which corresponds to a linear section of the embedding of $\oGr(5,10)$ in $\PP^{15}$).  
 \end{enumerate}
\end{theorem}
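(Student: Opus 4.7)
The plan is to prove items (1)--(4) in order, each following from specializing the machinery of Section \ref{algebraiccoverssection} to $\FF = M \oplus M$ together with the ambient decomposition $\varphi_*\OO_X = \OO_Y \oplus M \oplus M \oplus \wedge^2 M$.

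For (1) and (2), I would first decompose $S^2 \FF$ explicitly: the canonical splitting $S^2(M \oplus M) \cong S^2 M \oplus (M \otimes M) \oplus S^2 M$ combined with $M \otimes M \cong S^2 M \oplus \wedge^2 M$ yields $S^2(M \oplus M) \cong (S^2 M)^{\oplus 3} \oplus \wedge^2 M$. Since the only degree-$2$ summand of $\varphi_*\OO_X$ is $\wedge^2 M$, the complement $\Q_2 = (S^2 M)^{\oplus 3}$ must be the degree-$2$ part of the kernel defining $\varphi_*\OO_X$ inside $\RR(\FF)$; that $\Q$ is concentrated in degree $2$ beyond $\wedge^2 M$ follows from the generation statement in the proof of Theorem \ref{socle}. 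In the local basis $\{z_1, z_2, w_1, w_2\}$ of $\FF^\vee$, the three copies of $S^2 M^\vee$ in $\Q^\vee$ are $\{z_1^2, z_1 z_2, z_2^2\}$, $\{w_1^2, w_1 w_2, w_2^2\}$, and the symmetric tensor part $\{z_1 w_1, \tfrac{1}{2}(z_1 w_2 + z_2 w_1), z_2 w_2\}$, which recovers the nine generators (\ref{q}) and proves (2). A direct check shows the first syzygy matrix of these nine quadrics is linear (the syzygies arise from multiplying each $q_i$ by one of $z_j, w_j$), so Proposition \ref{quadeq} applies and the covering is determined by a morphism $\Phi \in \Hom(\Q, \FF) = \Hom((S^2 M)^{\oplus 3}, M^{\oplus 2})$, which is (1).

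For (3), I would apply the extension algorithm of Section \ref{iqrelations} to the explicit $q$ of (\ref{q}). With $C$, $D$, $N$ the coefficient matrices of the lift, solving $(q - \overline z C z_0 - D z_0^2)(l + N z_0) \equiv 0 \pmod{z_0^4}$ coefficient by coefficient produces, by steps (2) and (3) of the algorithm, the linear constraints eliminating the $n_{ij}$ and the quadratic relations on the $c_{ij}$ together with formulas $d_i = h_i(c_{ij})$. Proposition \ref{quadeq} guarantees that $\II_q$ is generated by the resulting quadrics. The remaining task is to identify this set of quadrics with the ten polynomials $N_{\pm 1}, \ldots, N_{\pm 5}$ defining $\oGr(5,10)$. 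I would fix a linear bijection between the independent $c_{ij}$ (after imposing the trace-free conditions) and the spinor coordinates $(\xi_0, \xi_{ij}, \xi_{ijkl})$ on $\PP^{15}$, and verify each $N_{\pm k}$ against one of the output quadrics. The SageMath algorithm of the appendix executes this symbolic comparison and produces the match.

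Finally, for (4), I would extract the $S_3$-Galois locus as a linear section of the embedded $\oGr(5,10) \subset \PP^{15}$. An $S_3$-Galois cover decomposes $\varphi_*\OO_X$ under $S_3$ into the trivial, sign, and standard two-dimensional representations, giving exactly $\OO_Y \oplus M \oplus M \oplus \wedge^2 M$ with $M$ the standard representation and $\wedge^2 M$ the sign. The $S_3$-equivariance of the multiplication forces explicit linear relations among the $c_{ij}$; cutting $\oGr(5,10)$ by the corresponding linear subspace of $\PP^{15}$ produces the claimed family, while the full parameter space $\II_q$ realizes a flat deformation of this linear section. The detailed verification and the precise identification of the Galois locus inside the parameter space are deferred to Theorem \ref{galoisthm}. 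The main obstacle across the four parts is (3): recognizing the computer-generated quadrics as the spinor equations of $\oGr(5,10)$ is an intrinsically combinatorial identification requiring the correct coordinate dictionary, which is why the paper relies on the symbolic computation rather than a direct representation-theoretic derivation.
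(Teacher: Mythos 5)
Your route is essentially the paper's --- reduce to a cover homomorphism via Theorem \ref{socle} and Proposition \ref{quadeq}, run the extension algorithm of Section \ref{iqrelations} on $q$ for (3), and defer the Galois analysis to Theorem \ref{galoisthm} --- but there is a genuine gap at the one place where the hypothesis that $M$ is \emph{simple} has to enter, and your argument never uses it. For (1)--(2) you assert that, since $\wedge^2 M$ is the only new degree-two summand of $\varphi_*\OO_X$, the kernel $\Q$ of the degree-two quotient map $\Sym^2(M\oplus M)\rightarrow\wedge^2 M$ ``must be'' the canonical complement $(\Sym^2 M)^{\oplus 3}$. That does not follow: the kernel is a priori just some rank-$9$ direct summand with quotient $\wedge^2 M$, and it coincides with the canonical $(\Sym^2 M)^{\oplus 3}$ only if every morphism $\Sym^2(M\oplus M)\rightarrow\wedge^2 M$ vanishes on the three $\Sym^2 M$ summands, i.e.\ only if $\Hom(\Sym^2 M,\wedge^2 M)=0$. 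This is exactly what simplicity buys: the paper's chain of equivalences starting from $\Hom(M,M)=k$ gives $\Hom(M\otimes M,\wedge^2 M)=k$, hence $\Hom(\Sym^2M,\wedge^2M)=0$, so the kernel is the canonical one and the local generators are precisely the nine quadrics of (\ref{q}). Without this, e.g.\ for a decomposable $M=\LL_1\oplus\LL_2$, the sheaf $\Sym^2 M$ contains a copy of $\wedge^2 M=\LL_1\otimes\LL_2$, the kernel can be a twisted complement, and $\IN(\II_X)$ need not be generated by the listed $q$; both the identification $\Hom(\Q,M^{\oplus 2})=\Hom((\Sym^2M)^{\oplus 3},M^{\oplus 2})$ in (1) and statement (2) would then be unjustified.

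Two smaller points. Part (3) as you describe it is what the paper actually does: the identification is by symbolic computation plus an explicit coordinate dictionary, with the match exhibited in the Pfaffian format of the spinor equations rather than quadric-by-quadric. For (4), saying that ``the full parameter space realizes a flat deformation of this linear section'' is not yet an argument: to deform an \emph{arbitrary} member of the family into the $S_3$-Galois locus you need the parameter space to be connected. The paper first checks that the ansatz $C_i=e_i\widetilde C$ satisfies all relations of $\II_q$, so the linear locus really sits inside the parameter space, and then invokes the connectedness of $\oGr(5,10)$; you should make both steps explicit.
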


\begin{proof} By Theorem \ref{socle} and the remark made after, $\varphi$ is determined by an element of 
$$\CHom(M^{\oplus 2};\Q,\varphi_*\OO_X),$$ 
where $\Q$ is a direct summand of $S^2(M\oplus M)$, that by Proposition \ref{quadeq}, is determined by a morphism in $\Hom\left(Q,M^{\oplus 2}\right)$. As $M$ is simple
$$\begin{array}{ccl}
\Hom(M,M)=k & \Leftrightarrow & \Hom(M\otimes M^\vee,\OO_Y) = k \\
            & \Leftrightarrow & \Hom(M\otimes M^\vee\otimes \wedge^2 M,\wedge^2 M) = k  \\
            & \Leftrightarrow & \Hom(M\otimes  M,\wedge^2 M) = k,
 \end{array}$$
hence $(\Sym^2M)^{\oplus 3}$ is the kernel of the morphism $S^2\left(M\oplus M\right)\rightarrow \wedge^2 M$, proving $(1)$. $(2)$ is a consequence of $(1)$ as $q$ is a basis for $(S^2M)^{\oplus 3}$.

Using $q$, we run the algorithm described in Section \ref{iqrelations} (see Appendix) and get
$$C^t=
\left(\begin{array}{rrrr}
c_{32} + 2 c_{43} & - c_{33} & - c_{13} & c_{03} \\
c_{53} & c_{32} & - c_{23} & c_{13} \\
- c_{52} & 2 c_{42} + c_{53} & c_{22} & c_{23} \\
c_{73} & - c_{63} & c_{32} & c_{33} \\
-\frac{1}{2} c_{72} + \frac{1}{2} c_{83} & \frac{1}{2} c_{62} - \frac{1}{2} c_{73} & c_{42} & c_{43} \\
- c_{82} & c_{72} & c_{52} & c_{53} \\
- c_{71} & c_{61} & c_{62} & c_{63} \\
- c_{81} & c_{71} & c_{72} & c_{73} \\
c_{80} & c_{81} & c_{82} & c_{83}
\end{array}\right).$$
With a change of variables 
$$\left(\begin{matrix}
   z_0 \\
   z_1 \\
   w_0 \\
   w_1
  \end{matrix}\right)\mapsto
\left(\begin{matrix}
 z_0 - (c_{32} + c_{43}) \\
 z_1 - (c_{42} + c_{53}) \\
 w_0 - (c_{62} + c_{73}) \\
 w_1 - (c_{72} + c_{83})
\end{matrix}\right),$$
which glues as we are choosing trace-free basis for each component $M$ with respect to the component itself, we get the matrix
$$C^t=
\left(\begin{array}{rrrr}
c_{43} & - c_{33} & - c_{13} & c_{03} \\
c_{53} & - c_{43} & - c_{23} & c_{13} \\
- c_{52} & - c_{53} & c_{22} & c_{23} \\
c_{73} & - c_{63} & - c_{43} & c_{33} \\
c_{83} & - c_{73} & - c_{53} & c_{43} \\
- c_{82} & - c_{83} & c_{52} & c_{53} \\
- c_{71} & c_{61} & - c_{73} & c_{63} \\
- c_{81} & c_{71} & - c_{83} & c_{73} \\
c_{80} & c_{81} & c_{82} & c_{83}
\end{array}\right)\cong
\left(\begin{array}{rrrr}
c_{11} &  c_{10} &  c_{01} & c_{00} \\
-c_{12} & - c_{11} & - c_{02} & -c_{01} \\
 c_{13} &  c_{12} & c_{03} & c_{02} \\
 -c_{21} & -c_{20} & -c_{11} & - c_{10}\\
  c_{22} &  c_{21} & c_{12} &  c_{11} \\
 -c_{23} & -c_{22} & - c_{13} &  -c_{12}\\
 c_{31} & c_{30} &  c_{21} & c_{20} \\
- c_{32} & -c_{31} & - c_{22} & - c_{21} \\
c_{33} & c_{32} & c_{23} & c_{22}
\end{array}\right).$$

In the last matrix we renamed the variables to have a better look at the structure of $C$ and we see that $C$ has the same decomposition as a triple cover homomorphism where each of the entries is a triple cover homomorphism,
\begin{equation}\label{C}
 C^t=\left(
\begin{matrix}
 C_1 & C_0 \\
 -C_2 & -C_1 \\
 C_3 & C_2
\end{matrix}\right),\hspace{2mm}
C_i=\left(
\begin{matrix}
 c_{i1} & c_{i0} \\
 -c_{i2} & -c_{i1} \\
 c_{i3} & c_{i2}
\end{matrix}\right).
\end{equation}

With the second step we get the vector $D$
$$D^t=\left(\begin{array}{l}
-2c_{11}^2 + 2c_{10}c_{12} + 2c_{01}c_{21} - c_{02}c_{20} - c_{00}c_{22} \\ 
-c_{10}c_{13} + c_{11}c_{12} - 2c_{02}c_{21} + c_{03}c_{20} + c_{01}c_{22} \\ 
2c_{11}c_{13} - 2c_{12}^2 - c_{03}c_{21} - c_{01}c_{23} + 2c_{02}c_{22} \\ 
-c_{01}c_{31} + c_{00}c_{32} + c_{11}c_{21} + c_{12}c_{20} - 2c_{10}c_{22} \\ 
\tfrac{1}{2}( -c_{00}c_{33} + c_{01}c_{32} - 5c_{12}c_{21} + c_{13}c_{20} + 4c_{11}c_{22}) \\ 
c_{01}c_{33} - c_{02}c_{32} + c_{13}c_{21} - 2c_{11}c_{23} + c_{12}c_{22} \\ 
2c_{11}c_{31} - c_{12}c_{30} - c_{10}c_{32} - 2c_{21}^2 + 2c_{20}c_{22} \\ 
c_{12}c_{31} + c_{10}c_{33} - 2c_{11}c_{32} - c_{20}c_{23} + c_{21}c_{22} \\ 
-c_{13}c_{31} - c_{11}c_{33} + 2c_{12}c_{32} + 2c_{21}c_{23} - 2c_{22}^2 
\end{array}\right)
$$
and all the quadratic relations that the $c_{ij}$ need to satisfy
$$\II_q=\left(\begin{array}{l}
c_{00}c_{13} - 3c_{01}c_{12} + 3c_{02}c_{11} - c_{03}c_{10} \\ 
c_{00}c_{23} - 3c_{01}c_{22} + 3c_{02}c_{21} - c_{03}c_{20}  \\ 
c_{10}c_{33} - 3c_{11}c_{32} + 3c_{12}c_{31} - c_{13}c_{30} \\ 
c_{20}c_{33} - 3c_{21}c_{32} + 3c_{22}c_{31} - c_{23}c_{30} \\ 
c_{00}c_{31} - c_{01}c_{30} - 3c_{10}c_{21} + 3c_{11}c_{20} \\ 
c_{00}c_{32} - c_{02}c_{30} - 3c_{10}c_{22} + 3c_{12}c_{20} \\ 
c_{00}c_{33} - c_{03}c_{30} - 9c_{11}c_{22} + 9c_{12}c_{21}  \\ 
c_{01}c_{33} - c_{03}c_{31} - 3c_{11}c_{23} + 3c_{13}c_{21} \\ 
c_{02}c_{33} - c_{03}c_{32} - 3c_{12}c_{23} + 3c_{13}c_{22} \\ 
c_{01}c_{32} - c_{02}c_{31} - c_{10}c_{23} + c_{13}c_{20}   \\  
\end{array}\right).
$$

One can rearrange the polynomials defining the embedding of the $\oGr(5,10)$ in $\PP^{15}$ as 
$$
\left\{\begin{array}{l}
    \xi_0 v-\operatorname{Pfaff}(M)=0 \\
    Mv=0,       
\end{array}\right.
$$
where 
$$M=\left(\begin{matrix}
      \xi_{12} & \xi_{13} & \xi_{14} & \xi_{15} \\
             & \xi_{23} & \xi_{24} & \xi_{25} \\
             &        & \xi_{34} & \xi_{35} \\
      -\operatorname{sym}       &        &        & \xi_{45}
\end{matrix}\right), \hspace{2mm} v=
\left(\begin{matrix}
 -\xi_{2345} \\
 \xi_{1345} \\
 -\xi_{1245} \\
 \xi_{1235} \\
 -\xi_{1234}
\end{matrix}\right).
$$

In the case of $\II_q$, take $\xi_0=3c_{11}$ and
$$M=
\left(\begin{matrix}
  3c_{21} & c_{30} & -c_{33} & -c_{31} \\
      & -c_{00} & c_{03} & c_{01}     \\
       &  & 3c_{12} & -c_{10}   \\
     \operatorname{-sym}  &   &  & c_{13}  \\  
\end{matrix}\right),\hspace{2mm}
v= \left(\begin{matrix}
 c_{02} \\ c_{32} \\ c_{23} \\ c_{20} \\ 3c_{22}
\end{matrix}\right).$$

Taking each $C_i=e_i\widetilde C$, where $e_i\in k$ and $\widetilde C$ is a triple cover block
\begin{equation*}
 \widetilde C=\left(\begin{array}{cc}
 c_1 & c_0 \\
 -c_2 & -c_1 \\
 c_3 & c_2
 \end{array}\right),
\end{equation*}
all the relations in $\II_q$ are satisfied (notice the index homogeneity of each polynomial defining $\II_q$). This is a linear component of $\oGr(5,10)$. In Theorem \ref{galoisthm} we prove it is an $S_3$-Galois branched cover. As $\oGr(5,10)$ is connected, statement $(4)$ holds. 
\end{proof}

\subsection{The linear component - $S_3$-Galois branched cover}\label{linearcomponent}

In this section we study a component of the degree $6$ covering maps described above, the component whose local equations correspond to a linear section of $\oGr(5,10)$ given by $C_i=e_i \widetilde C$, for $e_i\in k$ and $\widetilde C$ a `triple cover block'
\begin{equation*}
 \widetilde C=\left(\begin{array}{cc}
 c_1 & c_0 \\
 -c_2 & -c_1 \\
 c_3 & c_2
 \end{array}\right).
\end{equation*}

One can write such covering homomorphism as
\begin{equation}\label{linearequations}
 \begin{array}{rcl}
  \Phi\left(S^2(Z)\right) & = & e_1 \widetilde C Z +e_0 \widetilde C W \\
  \Phi\left(S^2(Z,W)\right) & = & -e_2\widetilde C Z -e_1\widetilde C W \\
  \Phi\left(S^2(W)\right) & = & e_3\widetilde C Z +e_2\widetilde C W, \\
 \end{array}
\end{equation}
where, by abuse of notation, $Z=\left(\begin{matrix}z_1 & z_2\end{matrix}\right)^t, W=\left(\begin{matrix}w_1 & w_2\end{matrix}\right)^t$ and $S^2(Z), S^2(Z,W), S^2(W)$ are vectors with the second symmetric powers of $z_i$ and $w_i$ as entries.

This allows one to think of $Z$ and $W$ as a basis for $\AA^2_Y$ and the cover homomorphism above as a cover homomorphism defining three points over a field. In particular, for $g\in \GL(2,k)$, one has the following action on a local basis 
$$
\left(\begin{array}{cc}z_i & w_i\end{array}\right)\mapsto g\left(\begin{array}{cc}z_i & w_i\end{array}\right)\text{, }\ i\in\{1,2\}.
$$

In the next proposition we study such actions.

\begin{proposition}\label{triplepoints}
 Let $\{q_1,q_2,q_3\} \subset \AA^2=\Spec\left(k[z,w]\right)$ be three points defined by the vanishing of the following polynomials
 $$\left(\begin{array}{c}
  z^2-e_1z-e_0w +2(e_0e_2-e_1^2) \\ zw+e_2z+e_1w-(e_0e_3-e_1e_2) \\ w^2-e_3z-e_2w+2(e_1e_3-e_2^2)
 \end{array}\right),$$
where $e_i\in k$. Then the following hold
\begin{enumerate}
  \item\label{points0} the $q_i$ are pairwise distinct if and only if $\Delta_{tc}(e_0,e_1,e_2,e_3)\neq 0$, where
  $$\Delta_{tc}(e_0,e_1,e_2,e_3):=
 e_0^2e_3^2 + 4e_0e_2^3 - 3e_1^2e_2^2 + 4e_1^3e_3 - 6e_0e_1e_2e_3;
 $$
  \item\label{points1} $\sum_i z(q_i)=\sum_i w(q_i)=0$, i.e. the origin is the barycenter of the points;
  \item\label{points2} via a linear change of variables on the basis $\{z,w\}$ we can change any quadruple $(e_i)_{0\leq i\leq 3}$ defining three distinct points into any other for which $\Delta_{tc}(e_i)\neq 0$;
  \item\label{points3} each quadruple $(e_0,e_1,e_2,e_3)$, for which $\Delta_{tc}(e_i)\neq 0$, is fixed by a representation of $S_3$, the symmetric group of order $3$, in $\GL(2,k)$. 
 \end{enumerate}
\end{proposition}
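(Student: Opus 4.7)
The plan is to view $A := k[z,w]/(q_1, q_2, q_3)$ as a rank-$3$ finite $k$-algebra via Example \ref{triplecoverexample} with the identification $c_i \leftrightarrow e_i$, $z_1 \leftrightarrow z$, $z_2 \leftrightarrow w$, and to deduce each statement from this algebraic picture. The key ingredient is the multiplication matrix on the basis $\{1,z,w\}$, read off directly from the defining relations:
$$M_z = \begin{pmatrix} 0 & -2(e_0 e_2 - e_1^2) & e_0 e_3 - e_1 e_2 \\ 1 & e_1 & -e_2 \\ 0 & e_0 & -e_1 \end{pmatrix},$$
and analogously $M_w$. Part (2) is then immediate: $\operatorname{tr}(M_z) = 0 + e_1 - e_1 = 0$, and since the eigenvalues of $M_z$ on $A \otimes \overline{k}$ are $z(q_1), z(q_2), z(q_3)$, we obtain $\sum_i z(q_i) = 0$; the same calculation for $M_w$ completes the assertion.

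For (1), the three points are pairwise distinct iff $A$ is \'etale over $k$, iff the trace form $\operatorname{tr}(ab)$ on $A$ is nondegenerate. I would compute its Gram matrix in the basis $\{1, z, w\}$ using $M_z$, $M_w$ and their products, and verify that the resulting $3 \times 3$ determinant is a nonzero scalar multiple of $\Delta_{tc}$. As a conceptual cross-check, $\Delta_{tc}$ is (up to the factor $-27$) the classical discriminant of the binary cubic $e_0 x^3 + 3 e_1 x^2 y + 3 e_2 x y^2 + e_3 y^3$; projecting the three fibre points to a generic line produces a cubic of this type whose discriminant vanishes exactly when two fibre points coalesce, which both explains the shape of $\Delta_{tc}$ and gives an independent verification.

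For (3) and (4), I observe that the requirement that $\{1, z, w\}$ is a $k$-basis of $A$ already forces the three points to be non-collinear in $\AA^2$; combined with the barycenter condition (2), this means that any two of $q_1, q_2, q_3$ are linearly independent vectors of $k^2$. Given a second such configuration $\{q_1', q_2', q_3'\}$, the unique $g \in \GL(2,k)$ sending $q_1 \mapsto q_1'$ and $q_2 \mapsto q_2'$ automatically sends $q_3 = -q_1 - q_2$ to $q_3'$, proving (3). For (4) the same argument applied to each of the six permutations of $\{q_1, q_2, q_3\}$ produces six elements of $\GL(2,k)$, and the resulting map $S_3 \to \GL(2,k)$ is a group homomorphism because two linear maps agreeing on a basis coincide; by construction this subgroup fixes the triple setwise and hence fixes the quadruple $(e_0, e_1, e_2, e_3)$.

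The main obstacle is the discriminant identification in (1): matching the $3 \times 3$ trace-form determinant with $\Delta_{tc}$ on the nose is a direct but tedious polynomial calculation, and the cleaner binary-cubic viewpoint requires a small invariant-theoretic argument to turn into a proof rather than merely a sanity check.
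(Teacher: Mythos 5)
Your proposal is correct in substance, and for parts (1) and (2) it follows a genuinely different route from the paper. The paper proves (2) by normalising two of the points to $(1,0)$ and $(0,1)$, evaluating the three quadrics at the points to force $q_3=(-1,-1)$, and disposing of the collinear case separately; it proves (1) by homogenising, projecting from $(0,0,1)$ to $\PP^1$ and computing the classical discriminant of the resulting binary cubic. You instead use the regular representation of the rank-$3$ algebra $A$: the vanishing traces of $M_z,M_w$ give (2) at once, and (1) becomes ``the trace form on $\{1,z,w\}$ is nondegenerate iff the points are distinct''. This buys a cleaner, case-free argument (it even sidesteps a small subtlety in the paper's projection step, namely that two distinct points on a line through the origin would also collide in $\PP^1$), at the price of the determinant identification you left unfinished. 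That identification is in fact short once (2) is known: since $\operatorname{tr}(z)=\operatorname{tr}(w)=0$, $\operatorname{tr}(z^2)=-6(e_0e_2-e_1^2)$, $\operatorname{tr}(zw)=3(e_0e_3-e_1e_2)$ and $\operatorname{tr}(w^2)=-6(e_1e_3-e_2^2)$, the Gram matrix is block diagonal and its determinant equals $27\bigl(4(e_0e_2-e_1^2)(e_1e_3-e_2^2)-(e_0e_3-e_1e_2)^2\bigr)=-27\,\Delta_{tc}$, so your claim holds on the nose and no invariant-theoretic detour through the binary cubic is needed. For (3) and (4) your argument is essentially the paper's: the paper normalises to $(e_i)=(1,0,0,1)$, exhibits the points $(1,1),(\epsilon,\epsilon^2),(\epsilon^2,\epsilon)$ and writes the matrices of $r$ and $\iota$ explicitly, whereas you construct the six elements of $\GL(2,k)$ abstractly from the permutations; both versions rest on the same implicit ingredient, that the point configuration determines the quadruple $(e_i)$ (equivalently, that a $\GL(2,k)$ change of variables returns the system to the same normal form). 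That deserves one sentence: it holds because $\operatorname{tr}(z)=\operatorname{tr}(w)=0$ is intrinsic, so $\{z,w\}$ stays a trace-free basis and the structure constants of $A$ in the basis $\{1,z,w\}$ are forced into the displayed pattern.

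Two small cautions. First, your eigenvalue argument for (2) and the chain ``distinct $\Leftrightarrow$ \'etale $\Leftrightarrow$ trace form nondegenerate'' need $\dim_k A=3$ with $\{1,z,w\}$ a basis; this is immediate when the points are distinct (the relations make $1,z,w$ span, and evaluation at three distinct points is then an isomorphism, which also yields your non-collinearity claim), but it should be stated, and in the non-reduced case one should argue with the length-$3$ triple-cover algebra rather than with the listed points. Second, the literal identification with Example \ref{triplecoverexample} via $c_i\leftrightarrow e_i$ does not quite match: the constant terms there carry the opposite sign to those in the Proposition (the Proposition's signs are the consistent ones, as the case $(e_i)=(1,0,0,1)$ shows). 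This does not affect your proof, since you read the multiplication matrix directly off the Proposition's relations, but the appeal to the Example should be phrased accordingly.
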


Notice that we can consider $X=\{q_1,q_2,q_3\}$ to be a triple cover of $\Spec(k)$. Then $(\ref{points0})$ is just stating that the (local) equation defining the branch locus is given by the vanishing of the polynomial $\Delta_{tc}(e_i)$, where $e_i \in k$ define the local equations of a triple cover.
It was already proved in \cite[Lemma $4.5$]{triple} but we show a different proof.

\begin{proof} 
$(\ref{points0})\colon$
With a change of variables $z\mapsto z/t$, $w\mapsto w/t$, and getting rid of denominators, we get three homogeneous polynomials of degree $2$ and hence an embedding of $\{q_1,q_2,q_3\}$ in $\PP^2_{\langle z,w,t\rangle}$. We can then project from the point $(0,0,1)$, which corresponds to the elimination of the variable $t$ from the ideal, and obtain $\{q_1,q_2,q_3\}$ in $\PP^1$ defined by the vanishing of the polynomial 
\begin{equation*}\label{triplecoverp1}
\begin{matrix}
(e_1e_2e_3-\tfrac{1}{3}e_0e_3^2-\tfrac{2}{3}e_2^3)z^3+(2e_1^2e_3-e_1e_2^2-e_0e_2e_3)z^2w \\ 
\hspace{20mm} +(e_1^2e_2+e_0e_1e_3-2e_0e_2^2)zw^2 +(\tfrac{2}{3}e_1^3-e_0e_1e_2+\tfrac{1}{3}e_0^2e_3)w^3.
\end{matrix}
\end{equation*}

The discriminant of a cubic polynomial $p(x)=ax^{3}+bx^{2}+cx+d$ is 
$$b^{2}c^{2}-4ac^{3}-4b^{3}d-27a^{2}d^{2}+18abcd,$$ (see \cite[Lemma $1.19$, Example $1.22$]{MukaiInvMod}), hence we get the result by direct computation. 

$(\ref{points1})\colon$ Assume that the points are not collinear. Applying a linear change of variables on $(z,w)$, we can assume that $q_1,q_2$ are vertex points, i.e. $z(q_1)=w(q_2)=1$ and $z(q_2)=w(q_1)=0$.

Evaluating the polynomial $z^2-e_1z-e_0w +2(e_0e_2-e_1^2)$ on the three points 
\begin{equation*}
 \left\{\begin{array}{lcl}
  0 & = & 1 - e_1 + 2(e_0e_2-e_1^2)  \\
  0 & = & - e_0 + 2(e_0e_2-e_1^2)  \\
  0 & = & z^2(q_3)-e_1z(q_3)-e_0w(q_3) + 2(e_0e_2-e_1^2)
 \end{array}\right.
\end{equation*}
one concludes that $e_1=1+\frac{z^2(q_3)-z(q_3)}{z(q_3)+w(q_3)-1}$. Notice that $z(q_3)+w(q_3)\neq 1$ as equality would imply collinearity of the points. From the polynomial $zw+e_2z+e_1w-(e_0e_3-e_1e_2)$ we get the following equations
\begin{equation*}
 \left\{\begin{array}{lcl}
  0 & = & e_2 - (e_0e_3-e_1e_2)  \\
  0 & = & e_1 - (e_0e_3-e_1e_2)  \\
  0 & = & zw(q_3)+e_2z(q_3)+e_1w(q_3)-(e_0e_3-e_1e_2),
 \end{array}\right.
\end{equation*}
which implies that $e_1=-\frac{zw(q_3)}{z(q_3)+w(q_3)-1}$ and therefore, $z(q_3)=-1$. By the same argument $w(q_3)=-1$ which implies that $\sum_i z(q_i)=\sum_i w(q_i)=0$. This still holds after any change of variables. We are left with the case of three collinear points.

If $q_1,q_2,q_3$ are distinct but lie in a line $l$, by a linear change of variables, one can assume that $l=\{z=0\}$. Then, as $z(q_i)=0$ for all $i$, the equation $w^2-e_2w+2(e_1e_3-e_2^2)=0$ has only two solutions which is a contradiction. We conclude that the points are collinear if and only if they are in the ramification locus.

As the points in the ramification locus can be obtained as the image of a linear map of three distinct points in $\AA^2$, and a $\GL(2,k)$ map on $(z,w)$ gives the same linear transformation of the pair $\left(\sum_i z(q_i),\sum_i w(q_i)\right)$, this pair is always $(0,0)$.

$(\ref{points2})\colon$ 
We proved above that for each quadruple $(e_i)\notin \Delta_{tc}$, the polynomials vanish in three non-collinear points with the origin as their barycenter, i.e. these points are defined by any two of them. 
With a linear change of basis one can send them to any points at will, which proves the result as three points correspond to a single quadruple $(e_i)$.

$(\ref{points3})\colon$ Using $(\ref{points2})$, take $(e_0,e_1,e_2,e_3)=(1,0,0,1)$. Then $q_1=(1,1)$, $q_2=(\epsilon,\epsilon^2)$ and $q_3=(\epsilon^2,\epsilon)$, where $\epsilon$ is a cubic root of unity. The action of $S_3$ on these three points is generated by a rotation $r$ sending $q_i$ into $q_{i+1}$ and a reflection $\iota$ changing $q_2$ with $q_3$. The representation of $S_3$ on $k^2$ is given by
$$
r\mapsto\left(\begin{array}{rr}
    \epsilon & 0 \\
    0 & \epsilon^2 
  \end{array}\right),\ 
\iota\mapsto\left(\begin{array}{rr}
    0 & 1 \\
    1 & 0 
  \end{array}\right).
$$
\end{proof}

We can now present our last result. In the Theorem we will not assume the sheaf of $\OO_Y$-modules $M$ to be simple. On the other hand we assume that the cover homomorphism defining the cover is in $\CHom\left((S^2M)^{\oplus 3},M^{\oplus 2}\right)$, allowing the model to be used for different cases. Of particular interest is the case of Galois closure of non-Galois triple covers. In \cite[$\S 5.3$]{DihedralCatPer} it is proven that these are $S_3$-covers. The next Theorem describes their local structure.

\begin{theorem}\label{galoisthm}
 Let $\varphi\colon X\rightarrow Y$ be a Gorenstein covering map of degree $6$ such that $$\varphi_*\OO_X=\OO_Y\oplus M\oplus M\oplus \wedge^2 M.$$ 
 Assume that $\varphi$ is defined by a general linear covering homomorphism 
 $$\Phi\in \CHom\left((S^2M)^{\oplus 3},M^{\oplus 2}\right),$$
 i.e. for a local choice of basis $\{z_i,w_i\}_{i=1,2}$ it can be written as in Equations $(\ref{linearequations})$ with $(e_i)\in k^4\backslash \{\Delta(e_i)_{tc}=0\}$. Then $X$ is an $S_3$-Galois branched cover. Furthermore,
 \begin{enumerate}
  \item\label{galois1} with a linear change of coordinates $I_X$ can locally be written in the form
  $$
   \left(\begin{array}{l}
  S^2(Z) - \widetilde C W \\
  S^2(Z,W) -\widetilde D\\
  S^2(W) -\widetilde C Z  \\
 \end{array}\right),$$
with $$
  \widetilde C = \left(\begin{array}{cc}
  c_1 & c_0 \\
  -c_2 & -c_1 \\
  c_3 & c_2 \\
 \end{array}\right),\hspace{2mm}
 \widetilde D = \left(\begin{array}{c}
2(-c_0c_2+c_1^2) \\ c_0c_3-c_1c_2 \\ 2(-c_1c_3+c_2^2)
 \end{array}\right).
  $$
  \item\label{galois2} $X/\ZZ_2$ is a triple cover $\varphi_2\colon X/\ZZ_2\rightarrow Y$ such that $\varphi_{2*}\OO_{X/\ZZ_2}=\OO_Y\oplus M$; 
  \item\label{galois3} $X/\ZZ_3$ is a double cover $\varphi_3\colon X/\ZZ_3\rightarrow Y$ such that $\varphi_{3*}\OO_{X/\ZZ_3}=\OO_Y\oplus \wedge^2 M$.
 \end{enumerate}
\end{theorem}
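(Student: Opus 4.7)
The plan is to apply Proposition \ref{triplepoints}(\ref{points2}) to normalise $(e_0,e_1,e_2,e_3)$ to $(1,0,0,1)$, which produces the explicit form of $\II_X$ in (\ref{galois1}); then to lift the $S_3$-representation of Proposition \ref{triplepoints}(\ref{points3}) to a global action on $X$; and finally to read off the two quotients in (\ref{galois2}) and (\ref{galois3}) from the $S_3$-isotypic decomposition of $\varphi_*\OO_X$. Since $\Delta_{tc}(1,0,0,1)=1\neq 0$, the normalisation is admissible. Substituting $e_0=e_3=1$, $e_1=e_2=0$ into (\ref{linearequations}) makes the linear parts of $\Phi$ on $S^2(Z)$, $S^2(Z,W)$ and $S^2(W)$ equal to $\widetilde{C}W$, $0$ and $\widetilde{C}Z$ respectively, and the constants are obtained by specialising the $D$-vector computed in the proof of Theorem \ref{maintheorem4} to $c_{ij}=e_ic_j$; with this choice only the middle-block entries survive and reproduce $\widetilde D$ up to a harmless sign/rescaling of $\widetilde{C}$.

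For the Galois structure, I would use the representation $S_3\hookrightarrow\GL(2,k)$ from Proposition \ref{triplepoints}(\ref{points3}) that fixes $(1,0,0,1)$, extended diagonally to the pairs $(z_i,w_i)$, $i=1,2$, so that it defines a $Y$-automorphism of $\AA(\FF^\vee)$. The essential check is that $\II_X$ in the form (\ref{galois1}) is preserved: the reflection $\iota\colon z_i\leftrightarrow w_i$ interchanges the first and third rows (as $\widetilde{C}Z\leftrightarrow\widetilde{C}W$) and fixes the middle row (since both $S^2(Z,W)$ and the constant vector $\widetilde D$ are swap-symmetric), whereas the rotation $r\colon(z_i,w_i)\mapsto(\epsilon z_i,\epsilon^2 w_i)$ rescales each row by a cube root of unity that matches on both sides (because $\widetilde{C}$ has scalar entries and $\epsilon^3=1$, while $S^2(Z,W)$ is fixed). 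Fibrewise the cover carries the regular representation of $S_3$, so $(\varphi_*\OO_X)^{S_3}=\OO_Y$, which gives $X/S_3=Y$ and makes $\varphi$ an $S_3$-Galois cover.

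Parts (\ref{galois2}) and (\ref{galois3}) then follow by identifying the $S_3$-isotypic pieces of $\varphi_*\OO_X$: $\OO_Y$ as the trivial summand, $\wedge^2 M$ as the sign summand, and $M\oplus M\cong M\otimes V_{\mathrm{std}}$ as the standard representation with multiplicity two. Restricting to $\ZZ_2\subset S_3$: $V_{\mathrm{std}}^{\ZZ_2}$ is one-dimensional and $\mathrm{sign}^{\ZZ_2}=0$, so $(M\oplus M)^{\ZZ_2}\cong M$ and $(\wedge^2 M)^{\ZZ_2}=0$, yielding $\varphi_{2*}\OO_{X/\ZZ_2}=\OO_Y\oplus M$. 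Restricting to $\ZZ_3=A_3$: $V_{\mathrm{std}}|_{\ZZ_3}$ splits as $\chi\oplus\chi^{-1}$ with no invariants while the sign restricts trivially to $A_3$, yielding $\varphi_{3*}\OO_{X/\ZZ_3}=\OO_Y\oplus\wedge^2 M$. The only real (and minor) obstacle is the row-by-row verification of $\iota$- and $r$-invariance of (\ref{galois1}); everything else is either mechanical specialisation of the algorithm of Section \ref{iqrelations} or a direct representation-theoretic identification.
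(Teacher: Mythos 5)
Your proposal is correct and follows essentially the same route as the paper: normalise $(e_i)$ to $(1,0,0,1)$ via Proposition \ref{triplepoints}(\ref{points2}), use the $S_3$-representation of Proposition \ref{triplepoints}(\ref{points3}) fixing that quadruple as an action preserving $\II_X$, and obtain (\ref{galois2}) and (\ref{galois3}) by taking invariants of the subgroups $\ZZ_2$ and $\ZZ_3$. The only difference is presentational: the paper writes down explicit local generators of the invariant subalgebras ($\{1,z_1+w_1,z_2+w_2\}$ and $\{1,z_1w_2-z_2w_1\}$) together with the resulting quotient equations, while you identify the same summands through the isotypic decomposition (trivial, sign on $\wedge^2M$, and two copies of the standard representation), which yields the identical conclusion.
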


\begin{proof} By Proposition \ref{triplepoints} - (\ref{points3}), the $S_3$ action is just the action fixing the quadruple $(e_i)$. Using Proposition \ref{triplepoints} - \ref{2}, we can set $(e_i)$ to be $(1,0,0,1)$ proving (\ref{galois1}).
 
 To prove (\ref{galois2}) and (\ref{galois3}), just notice that $(1,0,0,1)$ is fixed by the actions of $r$ and $\iota$ presented in the proof of Proposition \ref{triplepoints} - (\ref{points3}). Locally, the submodule of $\varphi_*\OO_X$ left invariant by the action of $\iota$ is generated by
 $\{1,z_1+w_1,z_2+w_2\}$. Hence the ideal defining $\OO_{X/\ZZ_2}$ is locally defined by the vanishing of the polynomials  
 $$S^2(Z)+2S^2(Z,W)+S^2(W)-\widetilde C(Z+W)-D.$$ 
 The submodule left invariant by the action of $r$ is generated by $\{1,z_1w_2-z_2w_1\}$. The ideal of $\OO_{X/\ZZ_3}$ is locally generated by the vanishing of the polynomial
 \begin{equation*}\begin{split}
& \left(\frac{z_1w_2-z_2w_1}{2}\right)^2 - \left((c_0c_3-c_1c_2)^2 - 4(-c_1c_3+c_2^2)(-c_0c_2+c_1^2)\right)  \\
\Leftrightarrow & \left(\frac{z_1w_2-z_2w_1}{2}\right)^2 - (c_0^2c_3^2 + 4c_0c_2^3 - 3c_1^2c_2^2 + 4c_1^3c_3 - 6c_0c_1c_2c_3). 
\end{split}
\end{equation*}
Notice that $(z_1w_2-z_2w_1)$ is a local generator of $\wedge^2M$. Furthermore, as all these actions are independent of the choice of basis, we conclude
 $$(\varphi_*\OO_X)^{\ZZ_2}\cong\OO_Y\oplus M\text{ and } (\varphi_*\OO_X)^{\ZZ_3}\cong\OO_Y\oplus \wedge^2 M.$$ 
\end{proof}

\section{Appendix}\label{append}
\begin{verbatim}
S.<z0, z1, w0, w1, nij, di, cij>=QQ[]
from sage.libs.singular.function_factory import singular_function
minbase = singular_function('minbase')
I = S.ideal([z0**2, z0*z1, z1**2, z0*w0, 
       (1/2)*(z0*w1 + z1*w0), z1*w1, w0**2, w1*w0, w1**2])
M = I.syzygy_module()
F = matrix(S, 9,1, lambda i,j: I.gens()[i]);
v=matrix(S,4,1,[z0,z1,w0,w1])
N = matrix(S, 16, 9, lambda i, j: 'n'+ str(i) + str(j));
C = matrix(S, 9, 4, lambda i, j: 'c'+ str(i) + str(j));
D = matrix(S, 9, 1, lambda i, j: 'd'+ str(i) + str(j));
V = [z0**2, z0*z1, z1**2,w0**2, w1*w0, 
                  w1**2, z0*w0, z1*w1, z0*w1, z1*w0]
Z2 = M*C*v+N*F
a = [c32 + c43,c42 + c53, c72 + c83,  c62 + c73]
for i in range(Z2.nrows()):
	for j in range(len(V)):
		a.append(Z2[i][0].coefficient(V[j]))
J = S.ideal(a)
R=S.quotient_ring(J)
R.inject_variables()
N = matrix(R, 16, 9, lambda i, j: 'n'+ str(i) + str(j));
C = matrix(R, 9, 4, lambda i, j: 'c'+ str(i) + str(j));
Clift = matrix(S, C.nrows(), C.ncols(), lambda i, j: C[i][j].lift())
Nlift = matrix(S, N.nrows(), N.ncols(), lambda i, j: N[i][j].lift())

Z1 = Nlift*Clift*v + M*D
b=[]
for i in range(Z1.nrows()):
	for j in range(v.nrows()):
		b.append(Z1[i][0].coefficient(v[j][0]))
JJ = S.ideal(b)
\end{verbatim}

\bibliographystyle{amsalpha}
\bibliography{Lit}

\end{document}